\newtheorem{thm}{Theorem}[section]
\newtheorem{conj}[thm]{Conjecture}
\newtheorem{lem}[thm]{Lemma}
\newenvironment {proof} {\noindent{\em Proof.}}{\hspace*{\fill}$\Box$\par\vspace{4mm}}
\newcommand{\ml}{l\kern-0.55mm\char39\kern-0.3mm}
\title{\textbf{Further results on the rainbow vertex-disconnection of  graphs\footnote{Supported by NSFC No.11871034 and11531011.}}}
\author{{\small Xueliang Li, Yindi Weng } \\
{\small  Center for Combinatorics and LPMC}\\
{\small Nankai University, Tianjin 300071, China}\\
{\small Email: lxl@nankai.edu.cn, 1033174075@qq.com}\\
}
\date{}
\begin{document}
\maketitle
\begin{abstract}
Let $G$ be a nontrivial connected and vertex-colored graph. A subset $X$ of the vertex set of $G$ is called rainbow if any two vertices in $X$ have distinct colors. The graph $G$ is called \emph{rainbow vertex-disconnected} if for any two vertices $x$ and $y$
of $G$, there exists a vertex subset $S$ such that when $x$ and $y$ are nonadjacent, $S$ is rainbow and $x$ and $y$ belong to different components of $G-S$; whereas when $x$ and $y$ are adjacent,
$S+x$ or $S+y$ is rainbow and $x$ and $y$ belong to different components of $(G-xy)-S$. Such a vertex subset $S$ is called a \emph{rainbow vertex-cut} of $G$. For a connected graph $G$, the \emph{rainbow vertex-disconnection number} of $G$, denoted by $rvd(G)$, is the minimum number of colors that are needed to make $G$ rainbow vertex-disconnected.

In this paper, we obtain bounds of the rainbow vertex-disconnection number of a graph in terms of the minimum degree and maximum degree of the graph. We give a tighter upper bound for the maximum size of a graph $G$ with $rvd(G)=k$ for $k\geq\frac{n}{2}$. We then characterize the graphs of order $n$ with rainbow vertex-disconnection number $n-1$ and obtain the maximum size of a graph $G$ with $rvd(G)=n-1$. Moreover, we get a sharp threshold function for the property $rvd(G(n,p))=n$ and prove that almost all graphs $G$ have $rvd(G)=rvd(\overline{G})=n$. Finally, we obtain some Nordhaus-Gaddum-type results: $n-5\leq rvd(G)+rvd(\overline{G})\leq 2n$ and $n-1\leq rvd(G)\cdot rvd(\overline{G})\leq n^2$ for the rainbow vertex-disconnection numbers of nontrivial connected graphs $G$ and $\overline{G}$ with order $n\geq 24$.

\noindent\textbf{Keywords:} rainbow vertex-cut, rainbow vertex-disconnection number, threshold function, Nordhaus-Gaddum-type result.

\noindent\textbf{AMS subject classification 2010:} 05C15, 05C40.
\end{abstract}

\section{Introduction}

All graphs considered in this paper are simple, finite and undirected. Let $G=(V(G), E(G))$ be a nontrivial connected graph with vertex set $V(G)$ and edge set $E(G)$. The $order$ of $G$ is denoted
by $n=|V(G)|$ and the $size$ of $G$ is denoted by $|E(G)|$. For a vertex $v\in V$, the \emph{open neighborhood} and \emph{closed neighborhood} of $v$ in $G$ are the set $N_{G}(v)=\{u\in V(G) | uv\in E(G)\}$ and $N_{G}[v]=N_{G}(v)\cup \{v\}$, respectively.
The \emph{degree} of $v$ in $G$ is $d_{G}(v)=|N_{G}(v)|$. The minimum and maximum degree of $G$ are denoted by $\delta(G)$ and $\Delta(G)$, respectively. Let $P_n$ denote a path with order $n$. Let $V_1$, $V_2$ be two disjoint vertex subsets of $G$.
We denote the set of edges between $V_1$ and $V_2$ in $G$ by $E(V_1,V_2)$. We follow \cite{BM} for graph theoretical notation and terminology not defined here.

In \cite{GC}, Chartrand et al. firstly studied the rainbow edge-cut by introducing the concept of rainbow disconnection of graphs.
Let $G$ be a nontrivial connected and edge-colored graph. An \emph{edge-cut} of $G$ is a set $R$ of edges of $G$ such that $G-R$ is disconnected. If any two edges in $R$ have different colors, then $R$
is a \emph{rainbow cut}. A rainbow cut $R$ is called a $u$-$v$ \emph{rainbow cut} if the vertices $u$ and $v$ belong to different components of $G-R$. An edge-coloring of $G$ is a rainbow disconnection coloring if for every two distinct vertices $u$ and $v$ of $G$,
there exists a $u$-$v$ rainbow cut in $G$, separating them. The
\emph{rainbow disconnection number} $rd(G)$ of $G$ is the minimum number of colors required by a rainbow disconnection coloring of $G$.

For vertex-colorings of graphs, the authors in \cite{BCLLW}
introduced the concept of rainbow vertex-disconnection
number. They gave some applications of the rainbow vertex-disconnection
numbers of graphs. For more results on rainbow and other colored disconnections of graphs, we refer the readers to \cite{BCL, BCJLWW, BCLLW, BHL, CLLW, PL}.

For a connected and vertex-colored graph $G$, let $x$ and $y$ be two vertices of $G$. If $x$ and $y$ are nonadjacent, then an $x$-$y$
\emph{vertex-cut} is a subset $S$ of $V(G)$ such that $x$ and $y$ belong
to different components of $G-S$. If $x$ and $y$ are adjacent, then an $x$-$y$ \emph{vertex-cut} is a subset $S$ of $V(G)$ such that $x$ and $y$ belong to different components of $(G-xy)-S$. A vertex subset $S$ of
$G$ is \emph{rainbow} if no two vertices of $S$ have the same color. An $x$-$y$ \emph{rainbow vertex-cut} is an $x$-$y$ vertex-cut $S$
such that if $x$ and $y$ are nonadjacent, then $S$ is rainbow; if $x$ and $y$ are adjacent, then $S+x$ or $S+y$ is rainbow.

A vertex-colored connected graph $G$ is called \emph{rainbow vertex-disconnected} if for any two vertices $x$ and $y$ of $G$, there exists an $x$-$y$ rainbow vertex-cut. In this case, the vertex-coloring $c$ is called a \emph{rainbow vertex-disconnection coloring} of $G$. For a connected graph $G$, the \emph{rainbow vertex-disconnection number}
of $G$, denoted by $rvd(G)$, is the minimum number of colors that are needed in order to make $G$ rainbow vertex-disconnected. A rainbow vertex-disconnection coloring with $rvd(G)$ colors is called an
$rvd$-\emph{coloring} of $G$.

An \emph{injective coloring} of a graph $G$ is a vertex-coloring of $G$ such that the colors of any two vertices with a common neighbor are different. The \emph{injective chromatic number} $\chi_i(G)$ of a graph $G$ is the minimum number of colors such that $G$ has an injective coloring using this number of colors. The injective coloring was first introduced in \cite{HKS} by Hahn et al. in 2002 and originated from complexity theory \cite{SP1}.

In this paper, we study the relationships among the graph parameters:  rainbow vertex-disconnection number, injective chromatic number, minimum degree and maximum degree. We obtain the following result in Section $2$:
$$\delta(G)\leq rvd(G)\leq \chi_i(G)\leq \Delta(G)(\Delta(G)-1)+1.$$
In Section $3$ we give a tighter upper bound for the maximum size of a  graph $G$ with $rvd(G)=k$ for $k\geq \frac{n}{2}$. In Section $4$ we characterize the graphs with rainbow vertex-disconnection number
$n-1$ and obtain the maximum size of graphs $G$ with $rvd(G)=n-1$. In Section $5$ we consider the sharp threshold function of random graphs  $G(n,p)$ with $rvd(G(n,p))=n$ and obtain that almost all graphs $G$ have $rvd(G)=rvd(\overline{G})=n$. In Section $6$ we get some Nordhaus-Gaddum-type results for the rainbow vertex-disconnection number, and leave a conjecture for further study.

\section{Preliminaries}

In this section, we first introduce some known results from \cite{BCLLW}. Then we obtain some bounds for the rainbow vertex-disconnection number of a graph.

\begin{lem}\label{rvddifcolor}\cite{BCLLW}
Let $G$ be a nontrivial connected graph, and let $u$ and $v$ be two vertices of $G$ having at least two common neighbors.
Then $u$ and $v$ receive different colors in any rvd-coloring of $G$.
\end{lem}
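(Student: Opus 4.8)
The plan is to argue by contradiction. Suppose there is an rvd-coloring $c$ of $G$ in which $u$ and $v$ receive the same color, and let $w_1$ and $w_2$ be two distinct common neighbors of $u$ and $v$. Since $G$ is simple, $w_1,w_2\notin\{u,v\}$, so $u,v,w_1,w_2$ are four distinct vertices. Because $c$ is a rainbow vertex-disconnection coloring, there exists a rainbow $w_1$-$w_2$ vertex-cut $S$, and I would extract the contradiction by examining which vertices $S$ must contain.

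The key observation is that \emph{every} $w_1$-$w_2$ vertex-cut must contain both $u$ and $v$. Indeed, $w_1uw_2$ and $w_1vw_2$ are two paths joining $w_1$ and $w_2$ whose only internal vertices are $u$ and $v$ respectively, and neither path uses the edge $w_1w_2$ (should it exist). Hence, to place $w_1$ and $w_2$ in different components of $G-S$ (when $w_1,w_2$ are nonadjacent) or of $(G-w_1w_2)-S$ (when they are adjacent), the cut $S$ must break both of these paths, which forces $u\in S$ and $v\in S$.

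It then remains only to read off the contradiction from the definition of a rainbow vertex-cut. If $w_1$ and $w_2$ are nonadjacent, then $S$ itself is rainbow; since $u,v\in S$, they must receive distinct colors. If $w_1$ and $w_2$ are adjacent, then $S+w_1$ or $S+w_2$ is rainbow, and in either case its subset $S$ is rainbow, so $u$ and $v$ again have distinct colors. Both conclusions contradict $c(u)=c(v)$, which completes the argument.

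Since the reasoning is this direct, there is essentially no hard step; the only point requiring care is confirming that the pair of common neighbors genuinely forces both $u$ and $v$ into every $w_1$-$w_2$ vertex-cut. In particular, I would emphasize that deleting the edge $w_1w_2$ in the adjacent case does not open an alternative route between $w_1$ and $w_2$ that avoids $u$ and $v$, precisely because the two exhibited paths do not use that edge.
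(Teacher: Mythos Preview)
Your argument is correct and is exactly the natural proof of this lemma. Note, however, that the present paper does not supply its own proof of this statement: the lemma is quoted from \cite{BCLLW} and stated here without proof, so there is no in-paper argument to compare against. Your reasoning---that any $w_1$-$w_2$ vertex-cut must contain both $u$ and $v$ because of the internally disjoint paths $w_1uw_2$ and $w_1vw_2$, and that the rainbow condition on $S$ (or on $S+w_i$) then forces $c(u)\neq c(v)$---is the standard justification and is fully rigorous.
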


\begin{lem}\label{rvdn}\cite{BCLLW}
Let $G$ be a nontrivial connected graph of order $n$. Then $rvd(G)=n$ if and only if any two vertices of $G$ have at least two common neighbors.
\end{lem}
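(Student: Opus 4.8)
The plan is to prove the two directions separately, first recording the trivial bound $rvd(G)\le n$, which always holds: coloring the $n$ vertices with $n$ distinct colors makes every vertex subset rainbow, and for any pair $x,y$ the set $V(G)\setminus\{x,y\}$ is then a rainbow $x$-$y$ vertex-cut. Indeed, in the nonadjacent case $G-(V(G)\setminus\{x,y\})$ consists of the two isolated vertices $x,y$, while in the adjacent case the same set, after deleting $xy$, still isolates $x$ and $y$, and $V(G)\setminus\{y\}=S+x$ is trivially rainbow. Thus the whole statement reduces to showing that $rvd(G)\ge n$ if and only if every two vertices have at least two common neighbors.

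For the ``if'' direction, suppose every two vertices of $G$ have at least two common neighbors. Then Lemma~\ref{rvddifcolor} forces any two vertices to receive different colors in every $rvd$-coloring, so at least $n$ colors are used; hence $rvd(G)\ge n$, and combined with the trivial bound this gives $rvd(G)=n$.

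For the ``only if'' direction I would argue the contrapositive: assuming some pair $u,v$ has at most one common neighbor, I exhibit an $rvd$-coloring with only $n-1$ colors. Color $u$ and $v$ with a common color and give the remaining $n-2$ vertices pairwise distinct new colors, so that a vertex set is rainbow precisely when it does not contain both $u$ and $v$. For a nonadjacent pair $x,y$, I use one of the isolating cuts $N(x)$ or $N(y)$, each of which separates the pair by isolating the corresponding vertex. The key point is that if both $N(x)$ and $N(y)$ contained $u$ and $v$, then (since $u,v\notin N(x)$ would fail only trivially) $u,v$ would be adjacent to both $x$ and $y$, making $x$ and $y$ two distinct common neighbors of $u,v$, contradicting the hypothesis; so at least one of these cuts is rainbow.

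The adjacent case is where the real work lies and is the step I expect to be the main obstacle. For $x\sim y$ I would try the two candidate cuts $S=N(x)\setminus\{y\}$, with associated set $S+x=N[x]\setminus\{y\}$, and $S'=N(y)\setminus\{x\}$, with associated set $S'+y=N[y]\setminus\{x\}$; each isolates the corresponding vertex in $(G-xy)-S$. I then need at least one of $S+x$, $S'+y$ to be rainbow. If both failed, then $\{u,v\}\subseteq N[x]\setminus\{y\}$ and $\{u,v\}\subseteq N[y]\setminus\{x\}$, which forces $u,v\notin\{x,y\}$ and hence $u,v\in N(x)\cap N(y)$, again making $x$ and $y$ two distinct common neighbors of $u,v$ and contradicting the hypothesis. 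Therefore a rainbow $x$-$y$ vertex-cut always exists, the $(n-1)$-coloring is an $rvd$-coloring, and $rvd(G)\le n-1<n$, completing the contrapositive.
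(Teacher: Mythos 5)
Your proof is correct. Note that the paper itself does not prove this lemma---it is imported from \cite{BCLLW} without proof---so there is no in-paper argument to compare against; your two-direction argument (the trivial upper bound $rvd(G)\le n$, the lower bound via Lemma~\ref{rvddifcolor}, and the contrapositive using the $(n-1)$-coloring together with the cuts $N(x)$, $N(y)$ in the nonadjacent case and $N[x]\setminus\{y\}$, $N[y]\setminus\{x\}$ in the adjacent case) is sound and is the natural way to establish the statement. The one slightly garbled phrase (``$u,v\notin N(x)$ would fail only trivially'') does not affect the argument, since the key observation stands: if both candidate cuts contained $\{u,v\}$, then $x$ and $y$ would be two distinct common neighbors of $u$ and $v$, contradicting $m_G(u,v)\le 1$.
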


\begin{thm}\label{rvdmindegree}
Let $G$ be a connected graph of order $n$ with minimum degree $\delta$. If $\delta\geq \frac{n+2}{2}$, then $rvd(G)=n$.
\end{thm}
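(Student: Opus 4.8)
The plan is to reduce the theorem to a purely local counting statement and then invoke Lemma~\ref{rvdn}. Since that lemma asserts $rvd(G)=n$ precisely when every pair of vertices of $G$ shares at least two common neighbors, it suffices to prove that the hypothesis $\delta\geq\frac{n+2}{2}$ forces $|N_G(u)\cap N_G(v)|\geq 2$ for all $u,v\in V(G)$.

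First I would fix two arbitrary vertices $u$ and $v$ and apply inclusion-exclusion: $|N_G(u)\cap N_G(v)|=d_G(u)+d_G(v)-|N_G(u)\cup N_G(v)|$. The minimum degree bound gives $d_G(u)+d_G(v)\geq n+2$, so everything hinges on bounding $|N_G(u)\cup N_G(v)|$ from above, and this is exactly where the adjacency of $u$ and $v$ enters.

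Then I would split into two cases. If $u$ and $v$ are nonadjacent, then neither of them lies in $N_G(u)\cup N_G(v)$, so $N_G(u)\cup N_G(v)\subseteq V(G)\setminus\{u,v\}$, giving $|N_G(u)\cup N_G(v)|\leq n-2$ and hence $|N_G(u)\cap N_G(v)|\geq (n+2)-(n-2)=4$. If $u$ and $v$ are adjacent, then $u\in N_G(v)$ and $v\in N_G(u)$, so I can only assert $|N_G(u)\cup N_G(v)|\leq n$; however, since no common neighbor can equal $u$ or $v$ (because $u\notin N_G(u)$ and $v\notin N_G(v)$ in a simple graph), this still yields $|N_G(u)\cap N_G(v)|\geq (n+2)-n=2$. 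In both cases the number of common neighbors is at least $2$, so Lemma~\ref{rvdn} immediately gives $rvd(G)=n$.

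The argument carries no real obstacle; the only point demanding care is the bookkeeping of the two endpoints in the union $N_G(u)\cup N_G(v)$, where the adjacent case is the tighter one and exactly saturates the required bound of $2$. This saturation also clarifies why $\frac{n+2}{2}$ is the natural threshold: it is the smallest degree guarantee that survives the worst (adjacent) case.
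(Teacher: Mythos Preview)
Your proposal is correct and follows essentially the same approach as the paper: both arguments show that $\delta\geq\frac{n+2}{2}$ forces every pair of vertices to have at least two common neighbors (via $|N_G(u)\cap N_G(v)|\geq d_G(u)+d_G(v)-n\geq 2$) and then invoke Lemma~\ref{rvdn}. Your case split on adjacency is more detailed than necessary---the paper simply uses $|N_G(u)\cup N_G(v)|\leq n$ uniformly---but the core idea is identical.
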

\begin{proof}
Since $\delta\geq \frac{n+2}{2}$, there exist at least $\frac{n+2}{2}\times2-n=2$ common neighbors for any two vertices of $G$. By Lemma \ref{rvdn}, we have $rvd(G)=n$.
\end{proof}

Let $x$ and $y$ be two vertices of a graph $G$. The \emph{local
connectivity} $\kappa_G(x,y)$ of two nonadjacent vertices $x$ and $y$ is the minimum number of vertices required to separate $x$ from $y$. If $x$ and $y$ are adjacent vertices, the local connectivity $\kappa_G(x,y)$ of $x$ and $y$ is defined as $\kappa_{G-xy}(x,y)+1$. The \emph{connectivity} $\kappa(G)$ of $G$ is the minimum number of vertices of $G$ whose removal results in a disconnected graph or a trivial graph. The \emph{upper connectivity} $\kappa^+(G)$ of $G$ is the upper bound of the function $\kappa_G(x,y)$ on $G$.

\begin{lem}\label{rvdlocalconn}\cite{BCLLW}
Let $G$ be a nontrivial connected graph of order $n$. Then
$\kappa(G) \leq \kappa^+(G) \leq rvd(G) \leq n$.
\end{lem}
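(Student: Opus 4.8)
The plan is to establish the three inequalities in the chain separately, the outer two being routine and the middle one $\kappa^+(G)\leq rvd(G)$ carrying the real content. The unifying observation for the whole argument is the trivial but decisive fact that a rainbow vertex subset has cardinality at most the number of colors used.

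For the leftmost inequality $\kappa(G)\leq\kappa^+(G)$, I would note that both quantities are extremal values of the same local-connectivity function $\kappa_G(\cdot,\cdot)$ over pairs of distinct vertices: $\kappa(G)$ is its minimum (attained on a nonadjacent pair when $G$ is not complete, and equal to $n-1$ on $K_n$), while $\kappa^+(G)$ is its maximum taken over all pairs. Since the minimum of a collection of numbers never exceeds its maximum over a superset, and $G$ being nontrivial guarantees at least one pair, the inequality is immediate. For the rightmost inequality $rvd(G)\leq n$, I would exhibit an explicit rainbow vertex-disconnection coloring with $n$ colors, namely the one assigning $n$ distinct colors to the $n$ vertices. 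Under this coloring every vertex subset is rainbow, so it suffices to check that each pair $x,y$ admits an $x$-$y$ vertex-cut: for nonadjacent $x,y$ take $S=N_G(x)$, which isolates $x$ in $G-S$ while leaving $y$ (since $y\notin N_G(x)$); for adjacent $x,y$ take $S=N_G(x)\setminus\{y\}$, which isolates $x$ in $(G-xy)-S$. In both cases the relevant set is automatically rainbow, so $rvd(G)\leq n$.

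The main step is $\kappa^+(G)\leq rvd(G)$. Here I would fix an $rvd$-coloring $c$ of $G$ using $rvd(G)$ colors and take an arbitrary pair $x,y$; the definition supplies an $x$-$y$ rainbow vertex-cut $S$. If $x$ and $y$ are nonadjacent, then $S$ separates them and is rainbow, so by the minimality in the definition of local connectivity together with the size bound on rainbow sets, $\kappa_G(x,y)\leq|S|\leq rvd(G)$. If $x$ and $y$ are adjacent, then $S$ separates $x$ and $y$ in $G-xy$ while $S+x$ (or $S+y$) is rainbow, whence $\kappa_G(x,y)=\kappa_{G-xy}(x,y)+1\leq|S|+1=|S+x|\leq rvd(G)$. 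Since this holds for every pair, taking the maximum over all pairs gives $\kappa^+(G)\leq rvd(G)$.

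I expect the only subtlety to lie in the adjacent case of the middle inequality: one must correctly match the definitional $+1$ in $\kappa_G(x,y)=\kappa_{G-xy}(x,y)+1$ with the fact that it is $S+x$, rather than $S$ itself, that is required to be rainbow, so that the extra vertex $x$ accounts simultaneously for the extra unit of local connectivity and of the color count. Everything else is bookkeeping once the size-versus-colors bound on rainbow sets is in hand.
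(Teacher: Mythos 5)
This lemma is quoted from reference [BCLLW] and the paper itself gives no proof, so there is nothing internal to compare against; your argument is correct and is essentially the standard one you would find in that reference: the chain $\kappa_G(x,y)\leq|S|\leq rvd(G)$ (with the $+1$ for adjacent pairs absorbed by the vertex $x$ in the rainbow set $S+x$) for the middle inequality, and the all-distinct coloring with $N_G(x)$ or $N_G(x)\setminus\{y\}$ as the cut for $rvd(G)\leq n$. The only point worth flagging is that your leftmost inequality silently invokes the global form of Menger's theorem, namely $\kappa(G)=\min_{x,y}\kappa_G(x,y)$; this is standard but is a genuine theorem rather than a definitional triviality, so it deserves an explicit citation.
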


\begin{lem}\label{rvdminlemm}\cite{M}
Let $K$ be a complete subgraph of $G$ with $E(G-K)\neq \emptyset$. Then there exists an edge $a_1a_2\in E(G-K)$ such that $k(a_1,a_2)=min\{d(a_1),d(a_2)\}$.
\end{lem}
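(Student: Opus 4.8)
The plan is to reduce the statement to a connectivity–maximization problem and then run a minimal-fragment argument. Recall first that for any edge $xy$ the quantity $\kappa_G(x,y)=\kappa_{G-xy}(x,y)+1$ is at most $\min\{d(x),d(y)\}$, since any family of internally disjoint $x$--$y$ paths uses distinct neighbors of $x$ and of $y$. Hence it suffices to exhibit an edge $a_1a_2\in E(G-K)$ with $\kappa_G(a_1,a_2)\ge\min\{d(a_1),d(a_2)\}$. I would argue by contradiction: assume that every edge $xy\in E(G-K)$ satisfies $\kappa_G(x,y)<\min\{d(x),d(y)\}$, equivalently $\kappa_{G-xy}(x,y)\le\min\{d(x),d(y)\}-2$. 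For each such edge fix a minimum separator $S$ of $x$ and $y$ in $G-xy$, and let $A$ be the vertex set of the component of $(G-xy)-S$ containing $x$ and $B$ the union of the remaining components (so $y\in B$). A short degree count shows $|A|\ge 2$: the $d(x)-1$ neighbours of $x$ in $G-xy$ lie in $A\cup S$, and at most $|S|\le d(x)-2$ of them lie in $S$, so $A$ contains a neighbour $x'$ of $x$.

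The role of the hypothesis that $K$ is complete enters next. I would show that $K$ cannot straddle the separation: if $K\cap A\ne\emptyset$ and $K\cap B\ne\emptyset$, then, $K$ being complete, $G$ would contain an edge joining $A$ to $B$ other than $xy$ (note $x,y\notin K$ since $xy\in E(G-K)$), contradicting the fact that $S$ separates $A$ from $B$ in $G-xy$. Therefore $K\subseteq A\cup S$ or $K\subseteq B\cup S$, so at least one of the two sides is disjoint from $K$. Relabelling $x\leftrightarrow y$ if necessary, I may always orient the separation so that the chosen endpoint lies in a side $A$ with $K\cap A=\emptyset$. Among all configurations (a violating edge together with a minimum separator and a $K$-avoiding side $A$ containing an endpoint), I now choose one for which $|A|$ is smallest. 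Since $A$ is disjoint from $K$ and contains the endpoint $x$ together with a neighbour $x'\in A$, the edge $xx'$ again lies in $E(G-K)$, so by assumption it too is violating.

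The contradiction is then obtained by "crossing" the two separations. Applying the standing assumption to $xx'$ produces a small separator $S'$ in $G-xx'$ with sides $A'\ni x$ and $B'\ni x'$, and $|S'|\le\min\{d(x),d(x')\}-2$. Writing $N(\cdot)$ for the external neighbourhood function and using its submodularity, namely $|N(X\cap Y)|+|N(X\cup Y)|\le|N(X)|+|N(Y)|$, together with the degree bounds on $|S|$ and $|S'|$, I would compare the four fragments $A\cap A'$, $A\cap B'$, $B\cap A'$, $B\cap B'$ and extract from them a new $K$-avoiding fragment that again carries a violating interior edge but is strictly smaller than $A$, contradicting minimality. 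The main obstacle is precisely this crossing step: one must keep careful track of the deleted edges $xy$ and $xx'$ when translating between separators in $G-xy$, in $G-xx'$, and neighbourhoods in $G$ (so that the counts $|N(\cdot)|$ are not corrupted by the single missing edge), and one must verify that the submodular inequality is applied to a pair of fragments whose intersection or union still lies strictly inside $A$ and still contains a vertex together with one of its neighbours. Handling the boundary cases where a fragment degenerates (empty intersection, or one side reducing to a single vertex) is where the delicate bookkeeping lies; once those are dispatched, the strict decrease of $|A|$ closes the argument.
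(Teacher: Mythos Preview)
The paper does not prove this lemma at all: it is quoted verbatim from Mader's 1973 paper (reference \cite{M} in the bibliography) and used as a black box to establish $\kappa^+(G)\ge\delta(G)$ in Theorem~\ref{rvdbound}. So there is no ``paper's own proof'' to compare against.

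Your outline is, in fact, a faithful rendition of Mader's original strategy: assume no edge outside $K$ attains the equality, pass to a minimum separator, observe that the completeness of $K$ forces $K$ to lie on one side of the separation (so that a $K$-free fragment exists), minimize that fragment, and then cross the two separations coming from $xy$ and $xx'$ using the submodularity of the external neighbourhood function to manufacture a strictly smaller $K$-free fragment. The opening reductions (the inequality $\kappa_G(x,y)\le\min\{d(x),d(y)\}$, the count giving $|A|\ge 2$, and the non-straddling of $K$) are all correct as written.

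That said, your proposal is explicitly a plan, not a proof: you yourself flag the crossing step as ``the main obstacle'' and only describe in words what has to be checked (the missing-edge bookkeeping, the degenerate-intersection cases, and the verification that the resulting fragment still carries an interior edge). None of this is actually carried out. So what you have is a correct road map to Mader's theorem, but not a proof; the substantive work---the submodular crossing inequality and the case analysis guaranteeing a strict decrease---remains to be written down. If you intend this to stand as a proof, you must fill in that step; if you intend it as a pointer to the literature, then simply citing Mader, as the paper does, is the honest and efficient choice.
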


\begin{lem}\label{rvduper}\cite{HKS}
Let $G$ be a graph with maximum degree $\Delta$. Then, $\chi_i(G)\leq \Delta(\Delta-1)+1$.
\end{lem}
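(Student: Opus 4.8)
The plan is to reformulate injective coloring as ordinary proper coloring of an auxiliary graph, and then bound the chromatic number of that auxiliary graph by a greedy argument. Define a graph $H$ on the same vertex set $V(G)$ in which two distinct vertices $u$ and $v$ are made adjacent precisely when they have a common neighbor in $G$, i.e. when there exists $w\in V(G)$ with $uw,vw\in E(G)$. By the very definition of injective coloring, a vertex-coloring of $G$ is injective if and only if it is a proper coloring of $H$. Hence $\chi_i(G)=\chi(H)$, and the whole problem reduces to bounding $\chi(H)$.

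Next I would estimate the maximum degree of $H$. Fix a vertex $u$. Every neighbor of $u$ in $H$ arises from some common neighbor $w\in N_G(u)$, and given such a $w$, the vertices sharing $w$ with $u$ are exactly the vertices in $N_G(w)\setminus\{u\}$, of which there are at most $d_G(w)-1\leq \Delta-1$. Since $u$ has at most $\Delta$ neighbors $w$ in $G$, the number of vertices joined to $u$ in $H$ is at most $\sum_{w\in N_G(u)}\bigl(d_G(w)-1\bigr)\leq \Delta(\Delta-1)$. Therefore $\Delta(H)\leq \Delta(\Delta-1)$.

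Finally I would invoke the standard greedy-coloring bound $\chi(H)\leq \Delta(H)+1$, which gives
$$\chi_i(G)=\chi(H)\leq \Delta(H)+1\leq \Delta(\Delta-1)+1,$$
as desired. The argument is short and the only genuinely delicate point is the degree count: one must note that the bound $\Delta(\Delta-1)$ counts neighbors of $u$ in $H$ with possible overcounting (a vertex $v$ may be reached through several common neighbors $w$), so the sum $\sum_{w\in N_G(u)}(d_G(w)-1)$ is an over-estimate of $|N_H(u)|$ rather than an exact value; this only strengthens the inequality, so no care beyond observing it is required. Thus the main obstacle is merely setting up the translation to $H$ correctly and justifying the degree bound, after which the result follows immediately.
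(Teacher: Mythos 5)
Your proof is correct. The paper itself gives no proof of this lemma --- it is quoted from the reference of Hahn, Kratochv\'{i}l, \v{S}ir\'{a}\v{n} and Sotteau --- but your argument (pass to the auxiliary ``common neighbor'' graph $H$ with $\chi_i(G)=\chi(H)$, bound $\Delta(H)\leq \Delta(\Delta-1)$ by counting through each common neighbor, and apply the greedy bound $\chi(H)\leq \Delta(H)+1$) is exactly the standard derivation of this bound, and the one delicate point, that the sum over $w\in N_G(u)$ may overcount neighbors of $u$ in $H$, is correctly handled since overcounting only strengthens the inequality.
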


\begin{thm}\label{rvdbound}
Let $G$ be a nontrivial connected graph with maximum degree $\Delta$. Then $\delta(G)\leq \kappa^+(G)\leq rvd(G)\leq \chi_i(G)\leq \Delta(\Delta-1)+1$.
\end{thm}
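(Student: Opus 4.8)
The chain consists of four inequalities, two of which are already in hand: Lemma~\ref{rvdlocalconn} gives $\kappa^+(G)\le rvd(G)$ (indeed $\le n$), and Lemma~\ref{rvduper} gives $\chi_i(G)\le\Delta(\Delta-1)+1$. So the plan is to establish the two remaining links, $\delta(G)\le\kappa^+(G)$ and $rvd(G)\le\chi_i(G)$, and then concatenate.

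For $\delta(G)\le\kappa^+(G)$ I would invoke Lemma~\ref{rvdminlemm}. Since $G$ is nontrivial and connected, I can choose a single vertex $v$ (viewed as a complete subgraph $K_1$) with $E(G-v)\ne\emptyset$; such a $v$ exists unless $G=K_2$, a case that is immediate since then the only pair $a_1,a_2$ is adjacent and $\kappa^+(G)=\kappa_G(a_1,a_2)=0+1=1=\delta(G)$. Applying Lemma~\ref{rvdminlemm} to $K=\{v\}$ produces an edge $a_1a_2$ with $\kappa_G(a_1,a_2)=\min\{d(a_1),d(a_2)\}\ge\delta(G)$. As $\kappa^+(G)$ is the maximum of $\kappa_G(x,y)$ over all pairs, it follows that $\kappa^+(G)\ge\kappa_G(a_1,a_2)\ge\delta(G)$.

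For $rvd(G)\le\chi_i(G)$, the idea is to show that any injective coloring $c$ of $G$ is already a rainbow vertex-disconnection coloring; taking an optimal injective coloring then gives the bound. The key observation is that injectivity makes the open neighborhood $N_G(w)$ of every vertex $w$ rainbow, since any two vertices of $N_G(w)$ share the common neighbor $w$. Given two vertices $x,y$: if $x$ and $y$ are nonadjacent, I take $S=N_G(x)$, which is rainbow and isolates $x$ in $G-S$, so $x$ and $y$ lie in different components. If $x$ and $y$ are adjacent, I take $S=N_G(x)\setminus\{y\}$; deleting $S$ from $G-xy$ isolates $x$, separating it from $y$.

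The main obstacle is the adjacent case, where one must exhibit a rainbow set of the form $S+x$ or $S+y$. Here it is essential to resist choosing $S+x$: an injective coloring need not give $x$ a color distinct from its neighbors, so $S+x$ may fail to be rainbow. Instead I would use $S+y=(N_G(x)\setminus\{y\})\cup\{y\}=N_G(x)$; since $x$ is a common neighbor of $y$ and of every other vertex of $N_G(x)$, injectivity forces all of $N_G(x)$ to receive distinct colors, so $S+y=N_G(x)$ is rainbow. This settles the adjacent case and hence $rvd(G)\le\chi_i(G)$. Concatenating the four inequalities yields the theorem.
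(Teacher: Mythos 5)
Your proposal is correct and follows essentially the same route as the paper: Lemma~\ref{rvdminlemm} combined with Lemma~\ref{rvdlocalconn} for $\delta(G)\le\kappa^+(G)\le rvd(G)$, and the observation that an injective coloring makes every open neighborhood $N_G(u)$ rainbow, so that $N_G(u)$ (nonadjacent case) or $N_G(u)\setminus\{v\}$ (adjacent case, with $S+v=N_G(u)$ rainbow) serves as the required rainbow vertex-cut, after which Lemma~\ref{rvduper} finishes the chain. Your extra care with the $K_2$ case and with choosing $S+y$ rather than $S+x$ only makes explicit what the paper leaves implicit.
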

\begin{proof}
By Lemmas \ref{rvdlocalconn} and \ref{rvdminlemm}, we have $rvd(G)\geq \kappa^+(G)\geq \delta(G)$. Let $c$ be an injective coloring of $G$. Let $u$ and $v$ be any two vertices of $G$. Since the colors of any two vertices with a common neighbor are different under $c$, $N_{G}(u)$ is rainbow. If $u$ and $v$ are adjacent, then $N_{G}(u)\setminus\{v\}$ is a $u$-$v$ rainbow vertex-cut. If $u$ and $v$ are not adjacent, then $N_{G}(u)$ is a $u$-$v$ rainbow vertex-cut. Thus, $c$ is a rainbow vertex-disconnection coloring of $G$. By Lemma \ref{rvduper}, we have $rvd(G)\leq \chi_i(G)\leq \Delta(\Delta-1)+1$. \end{proof}

\section{Bounds on the maximum size }

In this section, we give a tighter upper bound for the maximum size of a  graph $G$ with $rvd(G)=k$ for $k\geq \frac{n}{2}$, which is better for large $k$ than that in the following lemma reported in \cite{BCLLW}.

\begin{lem}\label{rvd-max3}\cite{BCLLW}
For $k\geq 4$, let $G$ be a graph of order $n$ with rvd$(G)=k$. Then, $\frac{1}{2}k(n-1)-{k\choose2}\leq |E(G)|_{\max}\leq k(n-1)-{k\choose2}$.
\end{lem}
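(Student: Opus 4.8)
The plan is to prove the two inequalities separately, reading $|E(G)|_{\max}$ as the maximum number of edges over all graphs of order $n$ with $rvd(G)=k$.

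For the upper bound I would first rewrite $k(n-1)-\binom{k}{2}=kn-\binom{k+1}{2}$, which is exactly the maximum possible size of a $k$-degenerate graph on $n$ vertices. So it suffices to show that every $G$ with $rvd(G)=k$ is $k$-degenerate. By Lemma \ref{rvdlocalconn} we have $\kappa^+(G)\le rvd(G)=k$. The key observation is that the bound $\kappa^+\le k$ is hereditary under vertex deletion: for any induced subgraph $H$ and any two vertices $x,y$ of $H$, separating $x$ from $y$ in $H$ is no harder than in $G$, so $\kappa_H(x,y)\le\kappa_G(x,y)$ (and the same holds for adjacent pairs via the $+1$ convention), giving $\kappa^+(H)\le\kappa^+(G)\le k$. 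Then I apply Mader's Lemma \ref{rvdminlemm} to each induced subgraph $H$ that still has an edge (taking the complete subgraph $K$ to be empty or a single vertex): it yields an edge $a_1a_2$ of $H$ with $\kappa_H(a_1,a_2)=\min\{d_H(a_1),d_H(a_2)\}\le\kappa^+(H)\le k$, so $H$ has a vertex of degree at most $k$. Hence every subgraph of $G$ has a vertex of degree at most $k$, i.e. $G$ is $k$-degenerate, and therefore $|E(G)|\le kn-\binom{k+1}{2}=k(n-1)-\binom{k}{2}$.

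For the lower bound I must instead exhibit a single graph $G^\ast$ of order $n$ with $rvd(G^\ast)=k$ and $|E(G^\ast)|\ge\tfrac12 k(n-1)-\binom{k}{2}=\tfrac{k(n-k)}{2}$ (the last form is the one I would actually target). My strategy is to squeeze $rvd$ between the two extremes of Theorem \ref{rvdbound}, namely $\delta(G^\ast)\le rvd(G^\ast)\le\chi_i(G^\ast)$: if I can build $G^\ast$ with $\delta(G^\ast)=k$ together with an injective $k$-coloring, then $rvd(G^\ast)=k$ at once, and moreover $\delta=k$ already forces $|E(G^\ast)|\ge\tfrac{kn}{2}\ge\tfrac{k(n-k)}{2}$. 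A clean engine is $K_{k,k}$: its two parts are the only injective-coloring constraint (same-part vertices share $k$ common neighbors, cross pairs share none), so coloring each part bijectively with the $k$ colors shows $\chi_i(K_{k,k})=k$, while $\delta(K_{k,k})=k$; hence $rvd(K_{k,k})=k$. To reach an arbitrary order $n$ I would assemble several such blocks (with a clique $K_k$ to absorb the remainder) and glue them along cut vertices or bridges, checking that a pair in different blocks is separated by a single, trivially rainbow, cut vertex while each block retains its own rainbow cuts, so the global value stays $k$; the edge count is then bounded below by the per-block contribution.

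I expect the lower bound to be the main obstacle. The upper bound is a fairly mechanical degeneracy argument once the hereditary nature of $\kappa^+\le k$ and Mader's lemma are in place. The delicate point in the construction is guaranteeing $rvd(G^\ast)\le k$ for \emph{every} $n$: any extra vertices sharing two or more common neighbors are forced to receive distinct colors by Lemma \ref{rvddifcolor}, so the gluing must be arranged so that the resulting common-neighbor conflict structure stays colorable with only $k$ colors (equivalently, so that $G^\ast$ does not drift toward the all-pairs-two-common-neighbors situation of Lemma \ref{rvdn}). Handling the residue when $n$ is not a convenient multiple, while preserving both $\delta=k$ (or at least $rvd\ge k$) and the injective or explicit $k$-coloring, is exactly where the care in the construction—and the precise value $\tfrac{k(n-k)}{2}$—comes from.
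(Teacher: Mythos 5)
This lemma is quoted from \cite{BCLLW}; the present paper gives no proof of it, so your attempt can only be judged on its own terms. Your upper-bound argument is complete and correct, and it is the natural route: $\kappa^+(G)\le rvd(G)=k$ by Lemma \ref{rvdlocalconn}, local connectivity can only decrease when passing to subgraphs, and Mader's Lemma \ref{rvdminlemm} (applied with $K$ empty) then gives every subgraph with an edge a vertex of degree at most $k$, so $G$ is $k$-degenerate and $|E(G)|\le kn-\binom{k+1}{2}=k(n-1)-\binom{k}{2}$.

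The lower bound, however, has a genuine gap: you never actually produce, for every $n\ge k$ and every $k\ge 4$, a graph of order $n$ with $rvd=k$ and at least $\frac{1}{2}k(n-1)-\binom{k}{2}=\frac{k(n-k)}{2}$ edges. The recipe you give --- glue copies of $K_{k,k}$ at cut vertices and add one clique $K_k$ ``to absorb the remainder'' --- only realizes orders of the form $n=2k+a(2k-1)+b(k-1)$, so it misses, for instance, every $n$ with $k\le n<2k$ and also $n=2k+1$; and for the orders it does reach you have not verified that the edge count clears $\frac{k(n-k)}{2}$. The rate matters here: the target demands $k/2$ edges per vertex on average beyond the first $k$ vertices, a pendant path used as filler contributes only one edge per vertex, and one can check that a single $K_{k,k}$ with $2k-2$ pendant vertices attached already falls short of the target for $k\ge 6$. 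The fix is available inside your own framework: a chain of cliques $K_k$ glued at cut vertices contributes exactly $\binom{k}{2}$ edges per $k-1$ new vertices, i.e.\ precisely the rate $k/2$, has $rvd=k$ for $k\ge 4$ by Lemmas \ref{rvdcomplete} and \ref{rvdsubgraph} together with the block-by-block coloring you describe, and the at most $k-2$ leftover vertices can then safely be attached as pendants because the first clique's surplus $\binom{k}{2}$ exceeds the resulting deficit $\frac{(k-2)r}{2}$. As written, though, the construction --- which you yourself flag as ``where the care comes from'' --- is not carried out, so the lower half of the lemma is not proved.
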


We need a lemma first.
\begin{lem}\label{maxedgeLem}
Let $G$ be a nontrivial connected graph with $rvd(G)=k$. Let $V_1,V_2,V_3,\cdots, V_k$ be the set of color classes of an rvd-coloring of $G$. Then for $i\in [k]$ and $|V_i|\geq 2$, we have
$$\sum_{v\in V_i} d_G(v)\leq n+\binom{|V_i|}{2}.$$
Let $S=\{v_i|v_i\in V_i\ and\ |V_i|=1\}$. We have
$$\sum_{v\in S} d_G(v)\leq (\frac{n+k}{2}-1)|S|.$$
\end{lem}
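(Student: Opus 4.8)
The plan is to treat the two inequalities separately; both rest on the contrapositive of Lemma~\ref{rvddifcolor}, namely that in an rvd-coloring any two equally colored vertices have at most one common neighbor.

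For the first inequality, fix a class $V_i$ with $|V_i|\ge 2$ and for each $u\in V(G)$ set $d_i(u)=|N_G(u)\cap V_i|$. I would first record the incidence identity $\sum_{v\in V_i}d_G(v)=\sum_{u\in V(G)}d_i(u)$, obtained by counting each edge meeting $V_i$ from the $V_i$-side. Next I count the monochromatic ``cherries'' centered at each vertex: $\sum_{u\in V(G)}\binom{d_i(u)}{2}$ is exactly the number of incidences (unordered pair in $V_i$, common neighbor of that pair), and since each pair in $V_i$ has at most one common neighbor this is at most $\binom{|V_i|}{2}$. Finally I apply the elementary inequality $t\le 1+\binom{t}{2}$, valid for every integer $t\ge 0$, to each $d_i(u)$ and sum over the $n$ vertices $u$; this yields $\sum_{v\in V_i}d_G(v)=\sum_u d_i(u)\le n+\sum_u\binom{d_i(u)}{2}\le n+\binom{|V_i|}{2}$. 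This half is routine.

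For the singleton bound the starting point is that a singleton $v$ spends a whole color on itself, so $N_G(v)$ carries at most $k-1$ distinct colors; writing $p(v)$ for that number and $\mathrm{exc}(v)=d_G(v)-p(v)$ for the number of ``repeated'' neighbors, and using $d_G(v)+|B(v)|=n-1$ where $B(v)$ is the set of non-neighbors of $v$, the target is equivalent to the aggregate estimate $\sum_{v\in S}\mathrm{exc}(v)\le\sum_{v\in S}|B(v)|$ once one invokes $p(v)\le k-1$. The structural lever is that two equally colored neighbors $a,a'$ of $v$ have $v$ as their only common neighbor, so no further vertex is adjacent to both; I would use this to trade each monochromatic repetition in $N_G(v)$ for forced non-adjacencies, and, running the same cherry count relative to each large class, the class-restricted bound $\sum_{v\in S}\binom{d_{V_j}(v)}{2}\le\binom{|V_j|}{2}$ for every non-singleton class $V_j$.

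The main obstacle is exactly this last step: the naive per-vertex comparison $\mathrm{exc}(v)\le|B(v)|$ need not hold, so the saving of the factor $\tfrac12$ has to be harvested globally over all singletons at once, balancing the repetitions in the neighborhoods of singletons against the combined shortage of edges forced by the ``unique common neighbor'' constraint. I expect the minimality of the coloring (that $k=rvd(G)$ and no color can be merged) to be essential here, since it is precisely what should prevent a single high-degree singleton from absorbing arbitrarily many monochromatic neighbors; converting this intuition into a clean charging or convexity argument, and correctly handling the interaction with the large color classes, is the step I anticipate to be delicate.
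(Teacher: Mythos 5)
Your first inequality is proved correctly and completely: the identity $\sum_{v\in V_i}d_G(v)=\sum_{u\in V(G)}d_i(u)$, the count $\sum_u\binom{d_i(u)}{2}\le\binom{|V_i|}{2}$ via the at-most-one-common-neighbour property from Lemma \ref{rvddifcolor}, and the pointwise bound $t\le 1+\binom{t}{2}$ together give the claim. This is the same double-counting idea as the paper's, but packaged more cleanly (the paper splits into cases according to $|E(V_i)|\lessgtr|V_i|/2$, which your version avoids).

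The second inequality, however, is not proved: you reduce the target to $\sum_{v\in S}\mathrm{exc}(v)\le\sum_{v\in S}|B(v)|$ and then explicitly leave the key step open, and the direction you propose (charging monochromatic repetitions inside $N_G(v)$ against non-neighbours of $v$, with the minimality of the colouring as the essential lever) is not the right one. The needed estimate is intrinsically a statement about \emph{pairs of distinct singletons}, not about one singleton and its own neighbourhood: for a single singleton the bound $2d_G(v)\le n+k-2$ can fail outright (e.g.\ a star with one extra edge joining two leaves admits an rvd-colouring with $k=2$ whose unique singleton class is the centre, of degree $n-1$), so no per-vertex or neighbourhood-local inequality, however aggregated, can deliver the factor $\tfrac12$; the saving must come from playing two different singletons against each other. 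The paper's argument does exactly that: for any $v_1,v_2\in S$ and any non-singleton class $V_j$, at most one vertex of $V_j$ can be a common neighbour of $v_1$ and $v_2$ (two such vertices would be equally coloured with two common neighbours, contradicting Lemma \ref{rvddifcolor}), whence $|E(v_1,V_j)|+|E(v_2,V_j)|\le|V_j|+1$; summing over the $k-|S|$ non-singleton classes and adding at most $2(|S|-1)$ for edges inside $S$ gives $d_G(v_1)+d_G(v_2)\le n+k-2$, and averaging over all $\binom{|S|}{2}$ pairs yields the stated bound. Minimality of the colouring plays no role, and note that this route (like the statement itself, per the example above) genuinely requires $|S|\ge 2$.
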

\begin{proof}
Without loss of generality, we assume that $|V_1|\leq |V_2|\leq\cdots\leq |V_k|$ and $s=|S|$. Then $S=\{v_1,v_2,\cdots,v_s\}$. For vertices $v_1$ and $v_2$, since $V_j$ ($j=s+1,s+2,\cdots,k$) is monochromatic, the vertices $v_1$ and $v_2$ have at most one common neighbor in $V_j$;  Otherwise, assume that $u_1,u_2\in V_j$ are the common neighbors of $v_1$ and $v_2$. Then we have that $v_1,v_2$ are two common neighbors of $u_1$ and $u_2$. So $u_1,u_2$ have different colors, a contradiction. So, we obtain $|E(v_1,V_j)|+|E(v_2,V_j)|\leq |V_j|+1$. Then we have
\begin{align*}
d_G(v_1)+d_G(v_2)&= |E(v_1,S-v_1)|+\sum_{j\in\{s+1,\cdots,k\}}|E(v_1,V_j)|\\
      &+|E(v_2,S-v_2)|+\sum_{j\in \{s+1,\cdots,k\}}|E(v_2,V_j)|\\
      &\leq 2(s-1)+\sum_{j\in \{s+1,\cdots,k\}}(|V_j|+1)\\
      &=2(s-1)+n-s+k-s\\
         &=n+k-2.
\end{align*}

Since the above inequality holds for any two vertices in $S$, we can derive that
$\sum_{i\in [s]}d_G(v_i)\leq \frac{(n+k-2)s}{2}$.

Now consider the degrees of vertices in $V_j$.
Let $\widetilde{d}(v)=|E(v,V_j)|$, where $v\in V(G)-V_j$. Let $T=\{v|\widetilde{d}(v)\geq 2\}$. Since $V_j$ is monochromatic, there are $\binom{|V_j|}{2}$ pairs of vertices in $V_j$ which have at most one common neighbor. Assume that $|E(V_j)|\leq \frac{|V_i|}{2}$.
For $v\in T$, when $\widetilde{d}(v)$ increases one, this will increase at least one pair of vertices in $V_j$ which has one common neighbor $v$. Then we have
\begin{align*}
|E(V_j,V(G)-V_j)|&= |V(G)-V_j-T|+\sum_{v\in T}{\widetilde{d}(v)}\\
               &=n-|V_j|+\sum_{v\in T}(\widetilde{d}(v)-1)\\
               &\leq n-|V_j|+\binom{|V_j|}{2}.
\end{align*}
Thus, we obtain
$$\sum_{v\in V_j}d_G(v)= 2|E(V_j)|+|E(V_j,V(G)-V_j)|\leq n+\binom{|V_j|}{2}.$$

If $|E(V_j)|>\frac{|V_j|}{2}$, assume that there are $p$ connected components $T_1,T_2,\cdots, T_p$ in $V_j$, which are trees. Each $T_i$ ($i\in[p]$) has at least $|T_i|-2$ pairs of vertices which have a common neighbor in $V_j$. Since
$$\sum_{i\in [p]}(|T_i|-2)=\sum_{i\in[p]}|T_i|-2p=|V_j|-2(|V_j|-|E_j|)=2|E_j|-|V_j|,$$
we have at least $2|E_j|-|V_j|$ pairs of vertices of $V_j$ which have no common neighbor in $V(G)-V_j$. So, we have
\begin{align*}
\sum_{v\in V_j}d_G(v)&= 2|E(V_j)|+|E(V_j,V(G)-V_j)|\\
                     &\leq2|E(V_j)|+n-|V_j|+\binom{|V_j|}{2}-(2|E_j|-|V_j|)\\
                    &=n+\binom{|V_j|}{2}.
\end{align*}
\end{proof}

\begin{thm}\label{bettermaxsizebound}
Let $G$ be a nontrivial connected graph with $rvd(G)=k$ for $k\geq \frac{n}{2}$. Then $|E(G)|_{\max}\leq \frac{(n+k-2)(2k-n)}{4}+\frac{(n-k)(n+1)}{2}$.
\end{thm}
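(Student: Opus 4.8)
The plan is to take an arbitrary $rvd$-coloring of $G$ with $k$ colors and bound $2|E(G)|=\sum_{v\in V(G)}d_G(v)$ by splitting the vertex set into the singleton color classes and the larger ones, applying Lemma \ref{maxedgeLem} to each part. Order the classes so that $|V_1|\le\cdots\le|V_k|$, let $S$ be the set of vertices lying in singleton classes, and put $s=|S|$; then there are exactly $k-s$ classes of size at least $2$, containing $n-s$ vertices in total. Adding the two estimates of Lemma \ref{maxedgeLem} gives
\begin{align*}
2|E(G)|&=\sum_{v\in S}d_G(v)+\sum_{i:\,|V_i|\ge 2}\sum_{v\in V_i}d_G(v)\\
&\le\Big(\tfrac{n+k}{2}-1\Big)s+(k-s)n+\sum_{i:\,|V_i|\ge 2}\binom{|V_i|}{2},
\end{align*}
so it suffices to maximize the right-hand side over all admissible class-size distributions.

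For a fixed $s$ the first two terms are constant, so I would maximize $\sum_{i:\,|V_i|\ge 2}\binom{|V_i|}{2}$ subject to $\sum_{i:\,|V_i|\ge 2}|V_i|=n-s$ with exactly $k-s$ summands, each at least $2$. Since $\binom{x}{2}$ is convex, this sum is largest at the extreme distribution in which one class has the greatest admissible size $n+s-2k+2$ while the other $k-s-1$ classes each have size $2$ (a standard majorization argument). The feasibility constraint $n-s\ge 2(k-s)$, that is $s\ge 2k-n$, suggests the substitution $t=s-(2k-n)\ge 0$; with it the maximized bound on $2|E(G)|$ reduces to a single-variable quadratic $f(t)$ with leading coefficient $\tfrac12$, and a short computation gives $f(t)-f(0)=\tfrac{t}{2}\bigl(t+k-n-1\bigr)$.

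The crux is then to show that $f$ attains its maximum at $t=0$, i.e.\ at $s=2k-n$ with all non-singleton classes of size exactly $2$. The feasible range is $0\le t\le n-k$ (using $s\le k$), and throughout it $t+k-n-1\le (n-k)+k-n-1=-1<0$ while $t\ge 0$, so $f(t)-f(0)\le 0$; here the hypothesis $k\ge\frac{n}{2}$ guarantees $2k-n\ge 0$ so that the range is non-empty, and $k\le n$ (from Lemma \ref{rvdlocalconn}) makes the sign work out. Evaluating $f$ at $t=0$, where the singletons number $2k-n$ and the $n-k$ remaining classes each have size $2$, contributing at most $(\tfrac{n+k}{2}-1)(2k-n)$ and $(n-k)(n+1)$ respectively, yields $2|E(G)|\le(\tfrac{n+k}{2}-1)(2k-n)+(n-k)(n+1)$, which is exactly twice the claimed bound. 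I expect the only real obstacle to be the bookkeeping of the two-stage optimization, first over the class sizes via convexity and then over $s$ via the quadratic, rather than any conceptual difficulty.
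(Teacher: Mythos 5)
Your argument is correct and follows essentially the same route as the paper's: both bound $2|E(G)|$ via Lemma \ref{maxedgeLem}, use convexity/majorization to push the non-singleton class sizes to the extreme distribution (one large class, the rest of size $2$), and then maximize the resulting upward-opening quadratic in $s$ over the feasible range, locating the maximum at $s=2k-n$. The only cosmetic difference is that you verify the endpoint maximum by computing $f(t)-f(0)$ directly rather than via the axis of symmetry as the paper does.
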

\begin{proof}
Let $V_1,V_2,V_3,\cdots, V_k$ be the set of color classes of an rvd-coloring of $G$. Assume that $S=\{v_i|v_i\in V_i\ and\ |V_i|=1\}$ and $s=|S|$. For any two $V_{j_1}$ and $V_{j_2}$ with $|V_{j_1}|\geq |V_{j_2}|\geq 3$, we move one vertex $u$ from $V_{j_2}$ to $V_{j_1}$. Then we have
\begin{align*}
&\sum_{v\in V_{j_1}\cup\{u\}} d_G(v)+\sum_{v\in V_{j_2}\setminus\{u\}} d_G(v)\\
 &\leq n+\binom{|V_{j_1}|+1}{2}+n+\binom{|V_{j_2}|-1}{2}\\
&=n+\binom{|V_{j_1}|}{2}+n+\binom{|V_{j_2}|}{2}+|V_{j_1}|-(|V_{j_2}|-1).
\end{align*}

We find the bound is larger after moving. So, there will be $k-s-1$ color classes with order $2$ and one color classes with order $n-s-2(k-s-1)=n-2k+s+2$. Now we define the upper bound function $f(s)$ as follows:
$$f(s)=\frac{(n+k-2)s}{2}+(k-s-1)(n+1)+n+\binom{n-2k+s+2}{2}.$$
Since $k-1\geq s\geq 2k-n$ and the axis of symmetry of function $f(s)$ is $x=\frac{3k-n+1}{2}$, we get the maximum value of $f(s)$ at $s=2k-n$. Since $f(2k-n)= \frac{n+k-2}{2}(2k-n)+(n-k)(n+1)$, by Lemma \ref{maxedgeLem}, we obtain $|E(G)|_{\max}\leq \frac{1}{2}\sum_{v\in V(G)}d_G(v)\leq \frac{1}{2}f(2k-n)=\frac{(n+k-2)(2k-n)}{4}+\frac{(n-k)(n+1)}{2}$. This upper bound is tighter than the upper bound $k(n-1)-{k\choose2}$ in Lemma \ref{rvd-max3} for $k\geq \frac{n}{2}$.
\end{proof}

\section{Graphs with rainbow vertex-disconnection number $n-1$}

Let $x$ and $y$ be two vertices of a graph $G$. We denote the set of common neighbors of $x$ and $y$ by $M_{G}(x,y)$. Let $m_G(x,y)=|M_G(x,y)|$. Let $S_{G}(x,y)$ be an $x$-$y$ rainbow vertex-cut in $G$. Let $D_G(x,y)$ be the rainbow vertex set such that if $x,y$ are adjacent, then $S_{G}(x,y)+x\subseteq D_G(x,y)$ or $S_{G}(x,y)+y\subseteq D_G(x,y)$ and $D_G(x,y)$ is rainbow; if $x,y$ are nonadjacent, then $S_{G}(x,y)\subseteq D_G(x,y)$ and $D_G(x,y)$ is rainbow. In order to prove that there exists an $x$-$y$ rainbow vertex-cut in $G$, we only need to find $D_G(x,y)$.

\begin{thm}\label{rvdn-1item}
Let $G$ be a nontrivial connected graph of order $n$. Then $rvd(G)=n-1$ if and only if $G$ satisfies the following three conditions:

\item{1}. There exists at least one pair $(x,y)$ of vertices with $m_G(x,y)\leq 1$.

\item{2}. For any two pairs $(x,y)$ and $(p,q)$ of vertices with $m_G(x,y)\leq 1$ and $m_G(p,q)\leq 1$, Fig. \ref{rvdn-1}.(1) or (2) is a subgraph of $G$ containing the vertex set $\{x,y,p,q\}$.

\item{3}. For any three pairs $(x,y)$, $(x,z)$, $(y,z)$ of vertices with $m_G(x,y)\leq 1$, $m_G(x,z)\leq 1$ and $m_G(y,z)\leq 1$,
    Fig. \ref{rvdn-1}.(3) or (4) is a subgraph of $G$ containing the vertex set $\{x,y,z\}$.

\begin{figure}[h]
    \centering
    \includegraphics[width=0.8\textwidth]{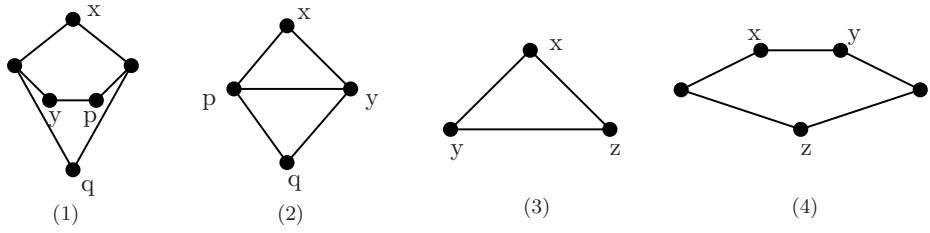}
    \caption{The graphs of condition $2$ and $3$.}\label{rvdn-1}
\end{figure}
\end{thm}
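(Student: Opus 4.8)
The plan is to reduce the characterization to a counting argument on color classes and then to identify, combinatorially, exactly which vertex pairs can fail to admit a rainbow vertex-cut. Since $rvd(G)\le n$ always holds, Lemma \ref{rvdn} shows that $rvd(G)\le n-1$ is \emph{equivalent} to Condition $1$: there is a pair $(x,y)$ with $m_G(x,y)\le 1$. Thus it remains to prove that, given Condition $1$, we have $rvd(G)\ge n-1$ if and only if Conditions $2$ and $3$ hold; equivalently, that $G$ admits a rainbow vertex-disconnection coloring with $n-2$ colors exactly when Condition $2$ or Condition $3$ fails. A coloring with $n-2$ colors on $n$ vertices has class sizes summing to $n$ in $n-2$ parts, so either exactly one class has size $3$ (a merged \emph{triple}) or exactly two classes have size $2$ (two merged \emph{pairs}), all other classes being singletons; moreover, refining a coloring with fewer colors gives such a coloring and refinement never destroys a rainbow cut, so it suffices to decide when a single triple-merge or a single two-pair-merge is valid. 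By Lemma \ref{rvddifcolor} the merged vertices must be pairwise special (at most one common neighbor per merged pair), which is precisely the hypothesis on the pairs in Conditions $2$ and $3$.

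The central tool I would establish first is a \emph{forced-vertex lemma}: for nonadjacent $a,b$, a vertex $w$ lies in every $a$-$b$ vertex-cut if and only if $w\in M_G(a,b)$. The forward direction is that the path $a$-$w$-$b$ can only be broken at $w$; for the converse, if $w\notin N_G(b)$ then $N_G(b)$ is an $a$-$b$ cut avoiding $w$ (and symmetrically with $N_G(a)$), so any non-common-neighbor is avoidable. With this lemma, for a fixed merge a cut is non-rainbow precisely when it contains both vertices of a merged pair (or, in the triple case, at least two of the three). Checking pairs whose endpoints lie inside a merged class shows these never obstruct, since obstruction would force two common neighbors of a special pair, contradicting $m_G\le 1$; the same contradiction rules out every pair with exactly one endpoint among the merged vertices. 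Hence only two genuinely obstructing configurations survive: an \emph{adjacent crossing pair} and an \emph{external pair}.

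For the adjacent crossing pair in the two-pair case, separating an edge $ab$ with $a,b$ in different merged pairs fails exactly when the two remaining merged vertices are both forced, i.e. both common neighbors of $a$ and $b$; together with specialness (which forbids the last edge) this yields precisely $K_4$ minus an edge, which I expect to be Fig.~\ref{rvdn-1}.(1), while the analogous internal obstruction in the triple case is a triangle on $\{x,y,z\}$, Fig.~\ref{rvdn-1}.(3). For the external pair $(u,v)$, the requirement that every $u$-$v$ cut be non-rainbow, combined with the forced-vertex lemma, forces a common neighbor of $u$ and $v$ from each relevant class (an \emph{apex}) together with a $u$-$v$ path whose internal vertices are exactly the remaining merged vertices; specialness then pins down the number of apices (two for the two-pair case, but only one for the triple, since a second apex inside the triple would give two vertices of the triple the common neighbors $u,v$, contradicting $m_G\le 1$), producing Fig.~\ref{rvdn-1}.(2) and Fig.~\ref{rvdn-1}.(4). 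Conversely, I would check that each of Fig.~\ref{rvdn-1}.(1)--(4), once present with its prescribed vertices monochromatic, \emph{robustly} blocks the corresponding merge: the forcing uses only the displayed short paths, so additional edges of $G$ cannot create a rainbow cut. This gives both directions of Conditions $2$ and $3$.

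The step I expect to be the main obstacle is the \emph{completeness} of the external-pair analysis: showing that whenever every inclusion-minimal $u$-$v$ separator is non-rainbow (for both endpoints outside the merged set), the graph must in fact contain the apex-plus-path configuration — in particular that the internal route cannot use any vertex outside the merged classes (else removing it gives a rainbow cut) and that the apices are genuine common neighbors of $u$ and $v$. Making this rigorous requires a careful argument with minimal separators and Menger's theorem to rule out crossing families of cuts that might be non-rainbow without matching any single figure; this is where the hypotheses $m_G\le 1$ are used most delicately to collapse the possibilities to the finitely many displayed subgraphs. Once this local classification is in place, Conditions $2$ and $3$ are exactly the statements that every admissible two-pair-merge and every admissible triple-merge is blocked, whence $rvd(G)\ge n-1$, completing the characterization.
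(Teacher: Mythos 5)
Your proposal follows essentially the same route as the paper: reduce via Lemmas \ref{rvdn} and \ref{rvddifcolor} to deciding when a single triple-merge or two-pair-merge with $n-2$ colors is a valid rvd-coloring, observe that only adjacent ``crossing'' pairs and external pairs $(u,v)$ can be obstructed, and identify the obstructions with the four displayed subgraphs (your guesses for which is Fig.~\ref{rvdn-1}.(1) versus (2) are swapped relative to the paper, but that is immaterial). The completeness step you flag as possibly needing Menger's theorem is handled in the paper more simply: the only rainbow candidate cuts one needs are the complements $V(G)\setminus\{u,v,a,b\}$ with $a,b$ taken one from each merged class (resp.\ two vertices of the merged triple), and the hypotheses $m_G\le 1$ bound the short $u$-$v$ paths through the merged vertices to at most one of each length-two type and at most two internally disjoint length-three paths, so a finite check of these paths either exhibits a working complement cut or assembles exactly the displayed configuration.
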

\begin{proof}
Let $rvd(G)=n-1$. Assume, to the contrary, that the graph $G$ does not satisfy at least one of the conditions. Then there are three cases to discuss.

\textbf{Case 1.} Each pair of vertices have at least two common neighbors.

By Lemma \ref{rvdn}, we have $rvd(G)=n$, a contradiction.

\textbf{Case 2.} There exist two pairs $(x,y)$ and $(p,q)$ of vertices with $m_G(x,y)\leq 1$ and $m_G(p,q)\leq 1$ which do not satisfy Condition $2$.

Define a vertex-coloring $c$ of $G$ with $n-2$ colors such that  $c(x)=c(y)=1$, $c(p)=c(q)=2$ and the remaining vertices have different colors from $3,4,\cdots, n-2$. Since $rvd(G)=n-1$, we have that $c$ is not a rainbow vertex-disconnection coloring of $G$. Then there exist two vertices $u,v$ which have no $u$-$v$ rainbow vertex-cut. Next, we claim that such vertices $u,v$ do not exist.

Let $P_1$ be the $u$-$v$ path of length two through a vertex with color $1$. Let $P_2$ be the $u$-$v$ path of length two through another vertex with color $2$. Let $P_3$ be the $u$-$v$ path of length three through two vertices with color $1$ and color $2$. Since $m_G(x,y)\leq 1$ and  $m_G(p,q)\leq 1$, there is at most one path $P_1$, at most one path $P_2$ and at most two internally disjoint paths $P_3$.

Consider that $u$ and $v$ are not adjacent. If $u\in \{x,y,p,q\}$ or $v\in\{x,y,p,q\}$, without loss of generality, assuming $u=x$, then $N_{G}(u)$ or $N_{G}(v)$ is a $u$-$v$ rainbow vertex-cut. So $u,v\not\in \{x,y,p,q\}$. There are several cases to deal with. Because of the symmetry of $P_1$ and $P_2$, some cases can be omitted. If there are no $P_1$, $P_2$ and $P_3$, then $D_{G}(u,v)=V(G)\setminus\{u,v,y,q\}$.
If there is one $P_1$ but no $P_2$, $P_3$, assuming $P_1=uxv$, then $D_{G}(u,v)=V(G)\setminus\{u,v,y,p\}$. If there is one $P_3$ but no $P_1$, $P_2$, assuming $P_3=uxpv$, then $D_{G}(u,v)=V(G)\setminus\{u,v,y,p\}$.
If there are $P_1$, $P_2$ but no $P_3$, assuming $P_1=uxv$ and $P_2=upv$, then $D_{G}(u,v)=V(G)\setminus\{u,v,y,q\}$.
If there are $P_1$, $P_3$ but no $P_2$, then assume $P_1=uxv$. When there exists one path $P_3$ which is internally disjoint with $P_1$, assuming $P_3=uypv$, we have $D_{G}(u,v)=V(G)\setminus\{u,v,y,q\}$. When all the paths $P_3$ pass the vertex $x$, since $m_G(p,q)\leq 1$, we only have one path $P_3$. Then $D_{G}(u,v)=V(G)\setminus\{u,v,y,p\}$.
If there are $P_1$, $P_2$ and $P_3$, then assume $P_1=uxv$ and $P_2=upv$. When there exists one path $P_3$ which is internally disjoint with $P_1$ and $P_2$, we have that Fig. \ref{rvdn-1}.(1) is a subgraph of $G$ containing $\{x,y,p,q\}$, a contradiction. When each path $P_3$ has a common vertex (not $u,v$) with $P_1$ or $P_2$, we have $D_{G}(u,v)=V(G)\setminus\{u,v,y,q\}$.

So, $u$ and $v$ are adjacent. When $u,v\not\in \{x,y,p,q\}$, similar to the situation where $u$ and $v$ are nonadjacent, there exists a $u$-$v$ rainbow vertex-cut. When $u\in \{x,y,p,q\}$ and $v\not\in \{x,y,p,q\}$, we have $N(u)\setminus\{v\}$ or $N(v)\setminus\{u\}$ is a $u$-$v$ rainbow vertex-cut. So, $u,v\in \{x,y,p,q\}$. If the colors of $u,v$ are the same, then $N(u)\setminus\{v\}$ or $N(v)\setminus\{u\}$ is a $u$-$v$ rainbow vertex-cut. So, the colors of $u$ and $v$ are different. Without loss of generality, we have $u=x$, $v=p$. If there is no $P_1=uyv$, then $D_{G}(u,v)=V(G)\setminus\{y,v\}$. If there is no $P_2=uqv$, then $D_{G}(u,v)=V(G)\setminus\{u,q\}$. So, there exist two paths $uyv$ and $uqv$. Thus, Fig. \ref{rvdn-1}.(2) is a subgraph of $G$ containing $\{x,y,p,q\}$, which is a contradiction.

\textbf{Case 3.} There exist three pairs $(x,y)$, $(x,z)$, $(y,z)$ of vertices with $m_G(x,y)\leq 1$, $m_G(y,z)\leq 1$ and $m_G(z,x)\leq 1$ which do not satisfy Condition $3$.

Define a vertex-coloring $c$ of $G$ with $n-2$ colors such that  $c(x)=c(y)=c(z)=1$, and the remaining vertices have different colors from $2,3,\cdots, n-2$. Since $rvd(G)=n-1$, we have that $c$ is not a rainbow vertex-disconnection coloring of $G$. Then there exist two vertices $u$ and $v$ which do not have a $u$-$v$ rainbow vertex-cut. Next, we claim that such vertices $u,v$ do not exist.

Let $Q_1$ be the $u$-$v$ path of length two through a vertex with color $1$. Let $Q_2$ be the $u$-$v$ path of length three through two vertices with color $1$. Since $m_G(x,y)\leq 1$, $m_G(y,z)\leq 1$ and $m_G(z,x)\leq 1$, there is at most one path $Q_1$ and at most one path $Q_2$.

Assume $u,v\not\in\{x,y,z\}$. Then there exist two internally disjoint paths $Q_1$ and $Q_2$. (Otherwise, if there are no paths $Q_1$ and $Q_2$, then $D_{G}(u,v)=V(G)\setminus\{y,z,u\}$; if there is a path $Q_1$ but no path $Q_2$, assuming $Q_1=uxv$, then $D_{G}(u,v)=V(G)\setminus\{y,z,u\}$; if there is a path $Q_2$ but no path $Q_1$, assuming $Q_2=uxyv$, then $D_{G}(u,v)=V(G)\setminus\{y,z,u\}$; if there exist $Q_1$ and $Q_2$, but $Q_1$ and $Q_2$ having a common vertex (not $u$,$v$), say $x$, then $D_{G}(u,v)=V(G)\setminus\{y,z,u\}$.) So, Fig. \ref{rvdn-1}.(4) is a subgraph of $G$ containing $\{x,y,z\}$, a contradiction.

Assume $u\in \{x,y,z\}$. Without loss of generality, let $u=x$. Suppose $v\not \in\{y,z\}$. If there exists $Q_1$, assuming $Q_1=uyv$, then $D_{G}(u,v)=V(G)\setminus\{z,u\}$; if there is no $Q_1$, then $D_{G}(u,v)=V(G)\setminus\{z,u\}$. So, we have $v\in \{y,z\}$. Assume $v=y$. When vertices $u$ and $v$ are not adjacent, then $V(G)\setminus \{u,v\}$ is a $u$-$v$ rainbow vertex-cut. So, vertices $u$ and $v$ are adjacent. If there is no path $Q_1$, then $D_{G}(u,v)=V(G)\setminus\{z,u\}$. If there is a path $Q_1$, then $Q_1=uzv$. So, Fig. \ref{rvdn-1}.(3) is a subgraph of $G$ containing $\{x,y,z\}$, a contradiction.

Now we are ready to show that a graph $G$ satisfying the three conditions has $rvd(G)=n-1$.

Let $c$ be any rvd-coloring of $G$. For the sake of contradiction, assume $rvd(G)\leq n-2$. If there are at least two colors which are repeated, then there exist four vertices $v_1,v_2,v_3,v_4$ with $c(v_1)=c(v_2)$ and $c(v_3)=c(v_4)$. By Lemma \ref{rvddifcolor}, we have $m_G(v_1,v_2)\leq 1$ and $m_G(v_3,v_4)\leq 1$. Then Fig. \ref{rvdn-1}.(1) or (2) is a subgraph of $G$ containing $\{v_1,v_2,v_3,v_4\}$. So, there are at least three colors for vertices $v_1,v_2,v_3$ and $v_4$, a contradiction. If there is only one color which is repeated, then there exist at least three vertices $v_1,v_2,v_3$ with $c(v_1)=c(v_2)=c(v_3)$. Similarly, we have $m_G(v_1,v_2)\leq 1$, $m_G(v_1,v_3)\leq 1$ and $m_G(v_2,v_3)\leq 1$ by Lemma \ref{rvddifcolor}. Then Fig. \ref{rvdn-1}.(3) or (4) is a subgraph of $G$ containing $\{v_1,v_2,v_3\}$. So, there are at least two colors for vertices $v_1,v_2$ and $v_3$, a contradiction. Thus, $rvd(G)\geq n-1$. By Lemma \ref{rvdn}, we have $rvd(G)\leq n-1$.
\end{proof}

\begin{thm}\label{rvdedgen-1max}
Let $G$ be a nontrivial connected graph of order $n$ with $rvd(G)=n-1$. Then $$|E(G)|_{max}=\begin{cases}
1,  & n=2,\\
\frac{1}{2}n(n-1)-n+3,  & n\geq 3.
\end{cases}$$
\end{thm}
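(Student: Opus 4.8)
The plan is to prove the formula for $|E(G)|_{\max}$ by separating the trivial case $n=2$ (where $G=P_2=K_2$ forces $rvd(G)=1=n-1$ and $|E(G)|=1$) from the main case $n\geq 3$. For $n\geq 3$ I would establish the two halves of the claimed equality separately: an upper bound $|E(G)|\leq \frac{1}{2}n(n-1)-n+3$ valid for every $G$ with $rvd(G)=n-1$, and a construction achieving this value to show the bound is sharp. The guiding intuition is that $K_n$ has $rvd(K_n)=n$ by Lemma~\ref{rvdn} (any two vertices share $n-2\geq 2$ common neighbors once $n\geq 4$), so to drop $rvd$ to $n-1$ we must delete enough edges to create a monochromatic-friendly configuration, and we want to delete as few as possible.

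For the upper bound, the key handle is Theorem~\ref{rvdn-1item}, which tells us $rvd(G)=n-1$ forces the existence of at least one pair $(x,y)$ with $m_G(x,y)\leq 1$ (Condition~1). I would argue that such a pair, together with the surrounding structural constraints, forces the absence of a certain number of edges relative to $K_n$. Concretely, if $(x,y)$ is a pair with at most one common neighbor, then among the other $n-2$ vertices at least $n-3$ of them fail to be common neighbors of $x$ and $y$, so for each such vertex $w$ at least one of the edges $xw$, $yw$ is missing; this already forces roughly $n-3$ missing edges. Combined with the possibility that $xy$ itself is present or absent, a careful count should yield that at least $n-3$ edges are absent from $K_n$, giving $|E(G)|\leq \binom{n}{2}-(n-3)=\frac{1}{2}n(n-1)-n+3$. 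The main obstacle here is doing this counting tightly: I must check that no configuration forces fewer than $n-3$ deletions while still keeping $rvd(G)=n-1$, and I expect to lean on Lemma~\ref{rvdn} to rule out $rvd(G)=n$ (which would need \emph{every} pair to have two common neighbors) and on Conditions~2 and~3 of Theorem~\ref{rvdn-1item} to prevent too much additional structure-saving.

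For sharpness, I would exhibit an explicit extremal graph. A natural candidate is $K_n$ with a small set of edges deleted so that exactly one pair of vertices loses enough common neighbors: for instance, take $K_n$ and delete the $n-3$ edges from a fixed vertex $x$ to all but two of the remaining vertices (or a comparable deletion), arranging that exactly one pair $(x,y)$ has $m_G(x,y)\leq 1$ while all other pairs retain at least two common neighbors, and then verify via Theorem~\ref{rvdn-1item} that the three conditions hold (so $rvd(G)=n-1$) and that $|E(G)|=\binom{n}{2}-(n-3)$. The verification that this single near-degenerate pair does not violate Conditions~2 and~3 — essentially that the deleted edges are few enough to leave the required subgraphs of Figure~\ref{rvdn-1} intact — is the delicate part of the construction. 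I expect the counting in the upper bound and the matching of the construction to the forbidden/required configurations of Figure~\ref{rvdn-1} to together be the crux; once both directions agree at $\frac{1}{2}n(n-1)-n+3$, the theorem follows.
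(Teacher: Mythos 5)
Your proposal matches the paper's proof in essence: both derive the upper bound from Condition 1 of Theorem \ref{rvdn-1item} by observing that a pair $(x,y)$ with $m_G(x,y)\leq 1$ forces at least $n-3$ edges of $K_n$ to be absent (the paper phrases this via the degree sum $d_G(x)+d_G(y)\leq n+1$, you via the $n-3$ non-common-neighbors each missing one of $xw,yw$ — the same count), and both achieve sharpness with $K_n$ minus $n-3$ edges incident to a single vertex. The approaches are the same; if anything, your insistence on explicitly verifying Conditions 2 and 3 of Theorem \ref{rvdn-1item} for the extremal graph is a point the paper leaves implicit.
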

\begin{proof}
When $n=2$, the graph is $K_2$. Consider $n\geq 3$. Since $rvd(G)=n-1$, there exists at least one pair $(x,y)$ of vertices with $m_G(x,y)\leq 1$ by Theorem \ref{rvdn-1item}. If $x$ and $y$ are adjacent, then $d_{G}(x)+d_{G}(y)\leq n+1$ and there are at least $2(n-1)-d_{G}(x)-d_{G}(y)\geq n-3$ edges which are not in $G$. If $x$ and $y$ are not adjacent, then $d_{G}(x)+d_{G}(y)\leq n-1$ and there are at least $2(n-1)-d_{G}(x)-d_{G}(y)-1\geq n-2$ edges which are not in $G$. Thus, $|E(G)|\leq \frac{1}{2}n(n-1)-n+3$. Let $H$ be a graph with $V(H)=\{v_1,v_2,\cdots,v_n\}$, which is obtained from $K_n$ by deleting edges $v_nv_i$ ($i=[n-3]$). We have $rvd(H)=n-1$ and $E(H)=\frac{1}{2}n(n-1)-n+3$.
\end{proof}

\noindent {\bf Remark:} This improves the result of Theorem \ref{bettermaxsizebound} for the case $k=n-1$, where only bounds were given.

\section{Results for random graphs}

Let $G=G(n,p)$ be the random graphs on $n$ vertices and edge probability $p$. In the study of properties of random graphs, many researchers observed that there are sharp \emph{threshold functions} for various natural graph properties. For a graph property $A$ and for a function $p=p(n)$, we say that $G(n,p)$ satisfies $A$ \emph{almost surely} if the probability that $G(n,p(n))$ satisfies $A$ tends to $1$ as $n$ tends to infinity. We say that a function $f(n)$ is a \emph{sharp threshold function} for the property $A$ if there are two positive constants $c$ and $C$ such that $G(n,cf(n))$ almost surely does not satisfy $A$ and $G(n,p)$ satisfies $A$ almost surely for all $p\geq Cf(n)$. It is well-known that all monotone graph properties have a sharp threshold function, see \cite{BTT} and \cite{FKE}. In \cite{YAYZR}, the authors obtained the sharp threshold function for the property $rc(G(n,p))\leq 2$ by proving the property that any two vertices of $G(n,p)$ have at least $2\log n$ common neighbors. By Lemmas \ref{rvdn} and \ref{rvdrandom} we can obtain Theorem \ref{rvdsharpn} immediately.
\begin{lem}\label{rvdrandom}\cite{YAYZR}
$p=\sqrt{\log n/n}$ is a sharp threshold function for the property $rc(G(n,p))\leq 2$.
\end{lem}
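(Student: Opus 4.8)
The plan is to prove the two halves of the sharp-threshold definition separately, both routed through the number of common neighbors of a pair of vertices, as the surrounding discussion suggests. The key preliminary observation is that with only two colors a rainbow path uses at most two edges, so $rc(G)\le 2$ holds if and only if there is a $2$-edge-coloring in which every pair of vertices is joined by a rainbow path of length $1$ or $2$. In particular, a non-adjacent pair sharing no common neighbor has no path of length at most $2$ at all, and hence forces $rc(G)>2$; this is the obstruction I will use below the threshold. Conversely, if every pair has sufficiently many common neighbors, a single random coloring will simultaneously rainbow-connect all pairs, giving $rc(G)\le 2$. Note also that $rc(G)\le 2$ is a monotone increasing property, so a sharp threshold exists by the cited results; the work is to locate it at $f(n)=\sqrt{\log n/n}$.

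For the lower direction I fix a constant $c$ with $0<c<\sqrt2$ and put $p=c\sqrt{\log n/n}$, so that $np^2=c^2\log n$. Let $X$ count the unordered pairs $\{u,v\}$ that are non-adjacent and have no common neighbor. A fixed pair is non-adjacent with probability $1-p$ and then has no common neighbor with probability $(1-p^2)^{n-2}$, so the first moment is
\[
\mathbb{E}[X]=\binom{n}{2}(1-p)(1-p^2)^{n-2}=\Theta\!\left(n^{2}e^{-np^2}\right)=\Theta\!\left(n^{\,2-c^{2}}\right),
\]
which tends to infinity for $c<\sqrt2$. A second-moment estimate then gives $\mathrm{Var}(X)=o\big(\mathbb{E}[X]^2\big)$, so by Chebyshev $X>0$ almost surely. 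Thus a bad pair exists and $rc(G(n,c\sqrt{\log n/n}))>2$ almost surely, so the property fails below the threshold.

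For the upper direction I fix a large constant $C$ and take any $p\ge C\sqrt{\log n/n}$. For a fixed pair $u,v$ the number of common neighbors is $\mathrm{Bin}(n-2,p^2)$, with mean at least $C^2\log n\,(1-o(1))$; a Chernoff bound makes the probability that this count falls below $K\log n$ smaller than $n^{-3}$ once $C$ is large (here $K$ is the constant playing the role of the ``$2\log n$'' in the surrounding text, chosen below), so a union bound over all $\binom n2$ pairs shows that almost surely every pair has at least $K\log n$ common neighbors. On such a graph I color each edge red or blue independently and uniformly, using a fresh source of randomness. For a pair with common neighbors $w_1,\dots,w_k$ ($k\ge K\log n$) the length-two paths $uw_iv$ are edge-disjoint, so each is rainbow independently with probability $\tfrac12$, and the probability that none is rainbow is $2^{-k}\le 2^{-K\log n}$. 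Choosing $K$ large enough that $\binom n2\,2^{-K\log n}<1$, the union bound leaves a coloring in which every pair is rainbow-connected, so $rc(G)\le 2$. Hence $rc(G(n,p))\le 2$ almost surely for all $p\ge C\sqrt{\log n/n}$, and together with the lower direction this identifies $\sqrt{\log n/n}$ as a sharp threshold.

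The main obstacle is the second-moment estimate in the lower bound. Disjoint pairs $\{u,v\}$ and $\{x,y\}$ depend on disjoint sets of potential edges, hence their indicator events are independent and contribute nothing to the covariance; only the $O(n^3)$ pairs of pairs sharing a single vertex are correlated, through their overlapping family of candidate common-neighbor edges at that vertex. One must check that the total covariance from these overlapping terms is $O\!\left(n^{3-2c^2}\right)$, which is $o\big(\mathbb{E}[X]^2\big)=o\!\left(n^{4-2c^2}\right)$, so that Chebyshev applies. A secondary subtlety is conceptual rather than computational: $rc(G)\le 2$ demands one coloring that works for all pairs at once, so a single common neighbor per pair is not enough, which is precisely why the logarithmic lower bound on common neighbors, rather than a merely positive one, is needed to force the random-coloring union bound through.
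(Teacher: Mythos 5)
First, a point of comparison: the paper offers no proof of this lemma at all --- it is an external citation to \cite{YAYZR}, and the only methodological hint given is the remark that the authors of that paper proceed ``by proving the property that any two vertices of $G(n,p)$ have at least $2\log n$ common neighbors.'' Your upper direction (Chernoff plus a union bound to guarantee $\Omega(\log n)$ common neighbors for every pair, then a uniformly random $2$-coloring in which the edge-disjoint length-two paths through common neighbors are rainbow independently with probability $\tfrac12$) reconstructs exactly that argument and is correct, as is your reduction of the lower direction to exhibiting a nonadjacent pair with no common neighbor, together with the first-moment computation $\mathbb{E}[X]=\Theta(n^{2-c^2})$.

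There is, however, a genuine error in the step you yourself single out as the main obstacle. You assert that two disjoint pairs $\{u,v\}$ and $\{x,y\}$ ``depend on disjoint sets of potential edges, hence their indicator events are independent and contribute nothing to the covariance.'' This is false. The event for $\{u,v\}$ depends on all edges incident to $u$ or $v$, and the event for $\{x,y\}$ on all edges incident to $x$ or $y$, so the two events share the four cross edges $ux,uy,vx,vy$: whether $x$ is a common neighbor of $u,v$ involves $ux,vx$, while whether $u$ is a common neighbor of $x,y$ involves $ux,uy$. Moreover both events are decreasing in the edge set, so by Harris/FKG they are \emph{positively} correlated --- precisely the sign that threatens a second-moment argument --- and the correlation is strict: restricted to the four cross edges, the joint constraint (no two of those edges forming a forbidden pair, i.e.\ an independent set in a $4$-cycle of constraints) has probability $(1-p)^4+4p(1-p)^3+2p^2(1-p)^2=1-4p^2+4p^3-p^4$, whereas the product of the two marginals is $(1-p^2)^4=1-4p^2+O(p^4)$, a multiplicative discrepancy of $1+4p^3+O(p^4)$. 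The proof survives: summing this covariance over the $\Theta(n^4)$ disjoint pairs of pairs gives $O(p^3)\,\mathbb{E}[X]^2=o\bigl(\mathbb{E}[X]^2\bigr)$ since $p\to 0$, and the $O(n^3)$ pairs of pairs sharing a vertex can be handled as you indicate. But as written your variance bookkeeping discards a nonzero (indeed positive) contribution on the basis of a false independence claim; that term must be estimated, not dismissed, before Chebyshev can be invoked.
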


\begin{thm}\label{rvdsharpn}
$p=\sqrt{\log n/n}$ is a sharp threshold function for the property $rvd(G(n,p))=n$.
\end{thm}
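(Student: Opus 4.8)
The plan is to remove the coloring language entirely and reduce Theorem \ref{rvdsharpn} to a statement about common neighbors, which can then be pinned down by a moment computation. By Lemma \ref{rvdn}, for a graph $G$ of order $n$ we have $rvd(G)=n$ if and only if every two vertices of $G$ have at least two common neighbors; call this property $\mathcal{A}$. Since adding an edge can only create, never destroy, common neighbors, $\mathcal{A}$ is a monotone increasing graph property, so a sharp threshold exists by \cite{BTT,FKE}; the task is just to place it at $\sqrt{\log n/n}$. Thus it suffices to exhibit constants $C$ and $c$ for which $\mathcal{A}$ holds almost surely when $p\geq C\sqrt{\log n/n}$ and fails almost surely when $p=c\sqrt{\log n/n}$.

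For the upper (property-holding) direction I would use precisely the ingredient underlying Lemma \ref{rvdrandom}. As recalled in \cite{YAYZR}, for $p\geq C\sqrt{\log n/n}$ the graph $G(n,p)$ almost surely has the stronger property that every two vertices share at least $2\log n$ common neighbors; since $2\log n\geq 2$ for $n\geq 3$, this gives $\mathcal{A}$, and hence $rvd(G(n,p))=n$, almost surely. If one prefers a self-contained estimate, the expected number of pairs with fewer than two common neighbors equals $\binom{n}{2}\bigl[(1-p^{2})^{\,n-2}+(n-2)\,p^{2}(1-p^{2})^{\,n-3}\bigr]$, and with $np^{2}=C^{2}\log n$ this behaves like $n^{\,2-C^{2}}\log n$, which tends to $0$ once $C>\sqrt{2}$; a first-moment bound then yields $\mathcal{A}$ almost surely.

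For the lower (property-failing) direction I would show that a pair of vertices with \emph{no} common neighbor survives. With $p=c\sqrt{\log n/n}$ the expected number of such pairs is $\binom{n}{2}(1-p^{2})^{\,n-2}\sim \tfrac12 n^{\,2-c^{2}}$, which tends to infinity whenever $c<\sqrt{2}$; a second-moment (variance) estimate then forces such a pair to exist almost surely, so $\mathcal{A}$ fails and $rvd(G(n,p))\neq n$ almost surely. This is exactly the obstruction driving the lower half of Lemma \ref{rvdrandom}: a pair with no common neighbor makes $\mathrm{diam}(G)>2$ and hence $rc(G)>2$, and simultaneously violates $\mathcal{A}$.

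The step I expect to be the main obstacle is this lower bound. One cannot simply feed the conclusion $rc(G)>2$ of Lemma \ref{rvdrandom} into $\neg\mathcal{A}$ as a black box, because $\mathcal{A}\Rightarrow rc(G)\leq 2$ is not an obvious deterministic implication (it is $\mathcal{A}\Rightarrow\mathrm{diam}(G)\leq 2$ that is immediate). The clean route is therefore to argue at the level of common neighbors, where the only genuinely probabilistic work is the variance control needed to upgrade the divergent first moment $\tfrac12 n^{\,2-c^{2}}$ into almost-sure existence; alternatively one may quote the specific no-common-neighbor obstruction from the proof of Lemma \ref{rvdrandom} rather than from its statement.
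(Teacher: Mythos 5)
Your proposal is correct and follows essentially the same route as the paper, which simply reduces the statement via Lemma \ref{rvdn} to the property that every two vertices have at least two common neighbors and then invokes Lemma \ref{rvdrandom} (more precisely, the ``at least $2\log n$ common neighbors'' estimate from \cite{YAYZR} underlying it), declaring the conclusion immediate. The subtlety you flag about the lower direction is real --- the paper does not spell out why the property fails below the threshold, since $rc(G)>2$ does not formally imply the existence of a pair with fewer than two common neighbors --- and your first/second-moment argument producing a pair with no common neighbor is exactly the right way to fill in that step.
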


\begin{lem}\cite{NJ}(Chernoff Bound)\label{rvdcher}
If $X$ is a binomial random variable with expectation $\mu$, and $0<\delta<1$, then
$$Pr[X<(1-\delta)\mu]\leq exp(-\frac{\delta^2\mu}{2})$$
and if $\delta>0$,
$$Pr[X>(1+\delta)\mu]\leq exp(-\frac{\delta^2\mu}{2+\delta})$$

\end{lem}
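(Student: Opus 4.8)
The plan is to obtain both inequalities by the standard exponential-moment (Chernoff) method, so I would first reduce everything to a single moment generating function estimate. A binomial variable is a sum $X=\sum_{i=1}^{n}X_i$ of independent Bernoulli($p$) indicators, so $\mu=E[X]=np$. The computation driving the whole proof is that, for every real $t$, independence gives $E[e^{tX}]=(1+p(e^t-1))^{n}$, and the elementary inequality $1+x\le e^x$ yields
$$E[e^{tX}]\le \exp\Bigl(np(e^t-1)\Bigr)=\exp\Bigl(\mu(e^t-1)\Bigr).$$
With this in hand, each tail bound is just Markov's inequality applied to an exponential, followed by optimizing in $t$ and a scalar estimate.

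For the upper tail with $\delta>0$, I would take $t>0$ and apply Markov to $e^{tX}$ to get
$$\Pr[X>(1+\delta)\mu]\le e^{-t(1+\delta)\mu}E[e^{tX}]\le \exp\Bigl(\mu\bigl(e^t-1-t(1+\delta)\bigr)\Bigr).$$
Minimizing the exponent over $t$ gives the optimal choice $t=\ln(1+\delta)$ and hence the sharp form $\Pr[X>(1+\delta)\mu]\le\bigl(e^{\delta}/(1+\delta)^{1+\delta}\bigr)^{\mu}$. It then remains to pass from this to the stated bound, i.e.\ to prove the scalar inequality $(1+\delta)\ln(1+\delta)\ge \delta+\delta^2/(2+\delta)$ for $\delta>0$. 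I would verify it by setting $h(\delta)=(1+\delta)\ln(1+\delta)-\delta-\delta^2/(2+\delta)$, noting $h(0)=h'(0)=0$, and reducing $h''(\delta)\ge 0$ to the polynomial inequality $(2+\delta)^3\ge 8(1+\delta)$, which is immediate since the difference equals $\delta^3+6\delta^2+4\delta$.

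For the lower tail with $0<\delta<1$, I would run the same argument with a negative exponent: apply Markov to $e^{-sX}$ with $s>0$, use the same moment generating function bound, and optimize to obtain $s=\ln\tfrac{1}{1-\delta}$, which yields $\Pr[X<(1-\delta)\mu]\le\bigl(e^{-\delta}/(1-\delta)^{1-\delta}\bigr)^{\mu}$. Reducing this to $\exp(-\delta^2\mu/2)$ amounts to $(1-\delta)\ln(1-\delta)\ge -\delta+\delta^2/2$ on $(0,1)$. Here the Maclaurin expansion is decisive: one checks that
$$(1-\delta)\ln(1-\delta)=-\delta+\sum_{k\ge 2}\frac{\delta^k}{k(k-1)},$$
whose quadratic term is exactly $\delta^2/2$ and whose remaining terms are all nonnegative for $0<\delta<1$, so the inequality follows termwise.

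The main obstacle is not the probabilistic machinery---which is a one-line MGF bound plus Markov---but the two transcendental scalar inequalities needed to convert the tight expressions $\bigl(e^{\delta}/(1+\delta)^{1+\delta}\bigr)^{\mu}$ and $\bigl(e^{-\delta}/(1-\delta)^{1-\delta}\bigr)^{\mu}$ into the clean exponential forms in the statement. The lower-tail inequality is handled cleanly by the termwise series comparison above; the upper-tail inequality is the more delicate of the two, but after differentiating twice it reduces to the elementary cubic comparison $(2+\delta)^3\ge 8(1+\delta)$, so no genuinely hard estimate is required.
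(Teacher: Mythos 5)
Your proof is correct, but note that the paper does not prove this lemma at all: it is quoted as a known result (the Chernoff bound) with a citation to Alon and Spencer's \emph{The Probabilistic Method}, so there is no internal proof to compare against. Your argument is the standard exponential-moment derivation found in that reference: the MGF bound $E[e^{tX}]\le\exp(\mu(e^t-1))$, Markov's inequality, optimization at $t=\ln(1+\delta)$ (resp.\ $s=\ln\frac{1}{1-\delta}$), and then the two scalar inequalities. Both of your scalar reductions check out --- the convexity argument for $(1+\delta)\ln(1+\delta)\ge\delta+\delta^2/(2+\delta)$ does come down to $(2+\delta)^3-8(1+\delta)=\delta^3+6\delta^2+4\delta\ge 0$, and the series identity $(1-\delta)\ln(1-\delta)=-\delta+\sum_{k\ge 2}\frac{\delta^k}{k(k-1)}$ gives the lower-tail inequality termwise --- so your writeup is a complete and correct self-contained proof of the cited lemma.
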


\begin{thm}\label{rvdrandom-n}
Almost all graphs $G$ have $rvd(G)=rvd(\overline{G})=n$.
\end{thm}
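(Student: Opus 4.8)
The plan is to work in the standard model $G = G(n,\frac{1}{2})$, under which ``almost all graphs'' means that the asserted property holds with probability tending to $1$ as $n\to\infty$. By Lemma \ref{rvdn}, $rvd(G)=n$ is equivalent to every pair of vertices of $G$ having at least two common neighbors, so it suffices to prove that almost surely this holds for both $G$ and $\overline{G}$. The key structural observation I would exploit is that if $G\sim G(n,\frac{1}{2})$, then $\overline{G}$ is again distributed as $G(n,\frac{1}{2})$; hence it is enough to establish the common-neighbor property for a single copy of $G(n,\frac{1}{2})$ and then invoke it twice.

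First I would fix two distinct vertices $u$ and $v$ and let $X_{uv}$ denote the number of their common neighbors. For each of the remaining $n-2$ vertices $w$, the events $uw\in E(G)$ and $vw\in E(G)$ are independent and each has probability $\frac{1}{2}$, so $w$ is a common neighbor with probability $\frac{1}{4}$. Thus $X_{uv}$ is a binomial random variable with expectation $\mu=\frac{n-2}{4}$.

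Next I would control the lower tail of $X_{uv}$ via the Chernoff bound (Lemma \ref{rvdcher}). Choosing $\delta=1-\frac{2}{\mu}$ so that $(1-\delta)\mu=2$, the first inequality of Lemma \ref{rvdcher} yields $\Pr[X_{uv}<2]\le \exp(-\delta^2\mu/2)$, and since $\delta\to 1$ while $\mu$ grows linearly, this is at most $\exp(-cn)$ for some absolute constant $c>0$ and all large $n$. A union bound over all $\binom{n}{2}$ pairs then shows that the probability that some pair has fewer than two common neighbors is at most $\binom{n}{2}\exp(-cn)\to 0$.

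Finally I would combine this estimate with the distributional symmetry noted above: applying the same bound to $\overline{G}$ and taking a union bound over the two events (each tending to $0$) shows that almost surely every pair of vertices has at least two common neighbors in both $G$ and $\overline{G}$. By Lemma \ref{rvdn} this gives $rvd(G)=rvd(\overline{G})=n$ almost surely. The only point requiring care is the choice of $\delta$ in the Chernoff estimate: one must check that $\delta^2\mu$ grows linearly in $n$ so that the exponential decay overwhelms the polynomial factor $\binom{n}{2}$ coming from the union bound. Beyond that verification, the argument is routine.
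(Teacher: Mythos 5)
Your proof is correct and follows essentially the same route as the paper: reduce via Lemma \ref{rvdn} to the two-common-neighbors property, apply the Chernoff bound to the binomial count with mean $\frac{n-2}{4}$, and finish with a union bound over pairs. The only (harmless) difference is that you invoke the self-complementarity of the distribution of $G(n,\frac{1}{2})$ to handle $\overline{G}$, whereas the paper directly bounds both the number of common neighbors and the number of common non-neighbors within the same graph.
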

\begin{proof}
Consider the random graphs $G(n,\frac{1}{2})$. By Lemma \ref{rvdn}, it suffices to show that almost surely any two vertices of $G(n,\frac{1}{2})$ have at least $2$ common neighbors and two common nonadjacent vertices. Let $x$ and $y$ be two vertices of $G(n,\frac{1}{2})$. By union bound, it suffices to show that $x,y$ do not have two common neighbors or two common nonadjacent vertices with probability $o(\frac{1}{n^2})$. Let $X_A$ be the number of common neighbors of $x$ and $y$. Let $X_B$ be the number of vertices which are not adjacent to $x,y$. Let $A_i$ be the event that vertex $i$ is the common neighbor of $x$ and $y$.  Let $B_i$ be the event that $i\notin N_{G}(x)\cup N_{G}(y)$. Then $$Pr(A_i)=Pr(B_i)=\frac{1}{4},\ \ \ E(X_A)=E(X_B)=\sum_{i}Pr(A_i)=\frac{n-2}{4}.$$
By Lemma \ref{rvdcher},
$$Pr(X_A< \frac{2n-4}{9})=Pr(X_B< \frac{2n-4}{9})\leq exp(-\frac{n-2}{648}) ,$$
where $\delta_A=\delta_B=\frac{1}{9}$.  Then, when $n$ is sufficiently large, we have
$$Pr(X_A<2\ or\ X_B<2)\leq 2exp(-\frac{n-2}{648})=o(\frac{1}{n^2}).$$
So, almost all graphs $G$ have $rvd(G)=rvd(\overline{G})=n$.
\end{proof}

\section{Nordhaus-Gaddum-type results}

In this section, we study the Nordhaus-Gaddum-type problem for the rainbow vertex-disconnection number of graphs. We assume that both a  graph $G$ and its complement $\overline{G}$ are connected of order $n$. So, we have $n\geq 4$.

\begin{lem}\cite{BCLLW}\label{rvdsubgraph}
If $G$ is a nontrivial connected graph and $H$ is a connected subgraph of $G$, then $rvd(H)\leq rvd(G)$.
\end{lem}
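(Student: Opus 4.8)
The statement to prove is Lemma~\ref{rvdsubgraph}: if $G$ is a nontrivial connected graph and $H$ is a connected subgraph of $G$, then $rvd(H)\leq rvd(G)$.

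\medskip

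\noindent The plan is to take an optimal $rvd$-coloring $c$ of $G$ using exactly $rvd(G)$ colors and show that its restriction $c'=c|_{V(H)}$ to the vertex set of $H$ is already a rainbow vertex-disconnection coloring of $H$. Since $c'$ uses at most $rvd(G)$ colors, this immediately yields $rvd(H)\leq rvd(G)$. The work therefore reduces to verifying that for every pair of vertices $x,y\in V(H)$ there exists an $x$-$y$ rainbow vertex-cut inside $H$ under the coloring $c'$.

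\medskip

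\noindent First I would fix two vertices $x,y\in V(H)$ and invoke the fact that $c$ is an $rvd$-coloring of $G$ to obtain an $x$-$y$ rainbow vertex-cut $S\subseteq V(G)$ in $G$. The natural candidate for a cut in $H$ is the trace $S'=S\cap V(H)$. The key observation is that passing from $S$ to $S'$ can only shrink a rainbow set, so $S'$ remains rainbow; likewise, since $x,y\in V(H)$, the relevant augmented set $S'+x$ or $S'+y$ stays rainbow whenever $S+x$ or $S+y$ was. Thus the rainbow condition is preserved automatically, and the only thing that needs genuine justification is that $S'$ still separates $x$ and $y$ within $H$.

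\medskip

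\noindent The main obstacle, then, is the separation property, and here I would use that $H$ is a subgraph of $G$. In the nonadjacent case, $S$ separates $x$ from $y$ in $G$, so every $x$-$y$ path in $G$ meets $S$; since every $x$-$y$ path in $H$ is in particular an $x$-$y$ path in $G$, it meets $S$, and because such a path lies in $H$ it meets $S\cap V(H)=S'$. Hence $x$ and $y$ lie in different components of $H-S'$, so $S'$ is an $x$-$y$ vertex-cut of $H$. In the adjacent case, if $xy\in E(H)$ I would apply the same argument to the graphs $(G-xy)$ and $(H-xy)$, noting $(H-xy)$ is a subgraph of $(G-xy)$, so that $S'$ separates $x$ from $y$ in $(H-xy)-S'$; if instead $xy\notin E(H)$ then $x,y$ are nonadjacent in $H$ and the first case applies verbatim. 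In every case $S'$ is an $x$-$y$ rainbow vertex-cut of $H$ under $c'$. As $x,y$ were arbitrary, $c'$ is a rainbow vertex-disconnection coloring of $H$, completing the proof.
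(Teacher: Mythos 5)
The paper does not prove this lemma at all: it is imported verbatim from \cite{BCLLW} as a known result, so there is no in-paper argument to compare against. Your restriction argument is the natural (and almost certainly the intended) proof, and it is essentially correct: restricting an $rvd$-coloring of $G$ to $V(H)$, taking for each pair $x,y\in V(H)$ an $x$-$y$ rainbow vertex-cut $S$ of $G$, and passing to $S'=S\cap V(H)$ preserves rainbowness (subsets of rainbow sets are rainbow) and preserves separation because every $x$-$y$ path of $H$ (respectively of $H-xy$) is a path of $G$ (respectively of $G-xy$) and therefore meets $S$, hence meets $S'$. One small imprecision: in the subcase where $xy\in E(G)$ but $xy\notin E(H)$, you say ``the first case applies verbatim,'' but the cut $S$ you are handed from $G$ is the adjacent-case cut, which separates $x$ and $y$ only in $(G-xy)-S$, not in $G-S$; the repair is immediate --- every $x$-$y$ path of $H$ avoids the edge $xy$ and so lies in $G-xy$, and $S$ is itself rainbow because $S+x$ or $S+y$ is --- but it is not literally the first case. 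With that one-sentence patch the proof is complete.
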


\begin{lem}\cite{BCLLW}\label{rvdcomplete}
For an integer $n\geq 2$,
$$rvd(K_{n})=\left\{
\begin{array}{lcl}
n-1,       &      & {if~n=2,3},\\
n,         &      & {if~n\geq 4}.
\end{array} \right .$$
\end{lem}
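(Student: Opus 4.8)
The plan is to split into the two regimes $n\geq 4$ and $n\in\{2,3\}$ and to lean on the characterization in Lemma \ref{rvdn}, since the structure of $K_n$ makes the hypothesis of that lemma trivial to check. The key observation is that in $K_n$ any two vertices are adjacent and share exactly $n-2$ common neighbors, namely all of the remaining vertices. Thus whether every pair has at least two common neighbors is decided simply by comparing $n-2$ with $2$.

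For $n\geq 4$ I would argue as follows. Since $n-2\geq 2$, every pair of vertices of $K_n$ has at least two common neighbors, so Lemma \ref{rvdn} applies directly and yields $rvd(K_n)=n$. This settles the first branch with essentially no computation.

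For the remaining cases $n=2$ and $n=3$, the count gives $n-2\in\{0,1\}$, so the hypothesis of Lemma \ref{rvdn} fails and that lemma only tells us $rvd(K_n)\leq n-1$. The substantive content is therefore the matching lower bound $rvd(K_n)\geq n-1$. I would obtain this from Lemma \ref{rvdlocalconn}, which gives $rvd(G)\geq \kappa^+(G)$; it then remains to verify $\kappa^+(K_n)=n-1$. Because all vertices are pairwise adjacent, for any pair $x,y$ one has $\kappa_{K_n}(x,y)=\kappa_{K_n-xy}(x,y)+1$, and in $K_n-xy$ one must delete all $n-2$ other vertices to separate $x$ from $y$; hence $\kappa_{K_n}(x,y)=n-1$ for every pair and $\kappa^+(K_n)=n-1$. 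Combining the two bounds gives $rvd(K_n)=n-1$ for $n\in\{2,3\}$.

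The main obstacle will be the careful bookkeeping hidden in the lower-bound step for the small complete graphs: since $x$ and $y$ are adjacent, an $x$-$y$ rainbow vertex-cut $S$ must satisfy the adjacency clause of the definition (that $S+x$ or $S+y$ be rainbow and that $x,y$ lie in different components of $(K_n-xy)-S$), so one must track the extra vertex $x$ or $y$ correctly when translating a coloring with few colors into a failure of the cut condition. As a sanity check one can verify directly that a single color fails on $K_3$ — separating two adjacent vertices forces $S$ to contain the third vertex, and then neither $S+x$ nor $S+y$ is rainbow — while a $2$-coloring succeeds, which confirms $rvd(K_3)=2$.
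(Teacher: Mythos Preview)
Your argument is correct. The paper itself does not supply a proof of this lemma: it is quoted verbatim from \cite{BCLLW} and used as a black box. So there is no ``paper's proof'' to compare against here.

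That said, your route is natural and would stand on its own inside this paper, since it only invokes Lemma~\ref{rvdn} and Lemma~\ref{rvdlocalconn}, both of which are already available. The $n\geq 4$ case is exactly the intended application of Lemma~\ref{rvdn}, and your computation $\kappa^+(K_n)=n-1$ for the small cases is clean: for adjacent $x,y$ one has $\kappa_{K_n}(x,y)=\kappa_{K_n-xy}(x,y)+1=(n-2)+1$, and this already handles $n=2$ (where $\kappa_{K_2-xy}(x,y)=0$) and $n=3$ uniformly. The closing paragraph about the adjacency clause and the hand-check on $K_3$ is not strictly needed once you have invoked Lemma~\ref{rvdlocalconn}, but it does no harm as a sanity check.
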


\begin{lem}\label{rvdkn-e}
$rvd(K_n-e)=n$ for $n\geq 5$ and $rvd(K_n-2e)=n$ for $n\geq 6$.
\end{lem}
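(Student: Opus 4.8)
The plan is to invoke Lemma \ref{rvdn}, which reduces each equality to a single verification: since $rvd(G)\le n$ always holds, it suffices to show that any two vertices of $K_n-e$ (resp. $K_n-2e$) have at least two common neighbors. Thus the whole proof is a case analysis on where the two chosen vertices $x,y$ sit relative to the deleted edge(s). The key observation throughout is that the common neighbors of $x$ and $y$ form the set $V(G)\setminus\{x,y\}$ with the (very few) vertices non-adjacent to $x$ or to $y$ removed, and the only non-adjacencies are those created by the deleted edges.

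For $rvd(K_n-e)=n$, write the deleted edge as $uv$. I would split into three cases for the pair $\{x,y\}$: when $\{x,y\}\cap\{u,v\}=\emptyset$ the common neighbors are all of $V(G)\setminus\{x,y\}$, giving $n-2$; when exactly one of $x,y$ lies in $\{u,v\}$, one extra vertex is lost and the count is $n-3$; and when $\{x,y\}=\{u,v\}$ the count is again $n-2$. The minimum is $n-3$, which is $\ge 2$ precisely when $n\ge 5$, so Lemma \ref{rvdn} gives $rvd(K_n-e)=n$ for $n\ge 5$.

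For $rvd(K_n-2e)=n$ I would first distinguish the two configurations of the deleted edges: either they are independent, say $u_1v_1$ and $u_2v_2$ with four distinct endpoints, or they share a vertex, forming a path $v_1uv_2$. In each configuration I run through the subcases for the location of $x$ and $y$ among the defective vertices. The pair minimizing the common-neighbor count turns out to give exactly $n-4$: for independent edges this occurs when $x,y$ are endpoints of different deleted edges (for instance $x=u_1$, $y=u_2$, whose common neighbors are $V(G)\setminus\{u_1,u_2,v_1,v_2\}$), and for the shared case when $x=u$ and $y$ is any non-defective vertex (common neighbors $V(G)\setminus\{u,y,v_1,v_2\}$). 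Every other subcase yields at least $n-3$. Since $n-4\ge 2$ iff $n\ge 6$, the common-neighbor condition holds for all pairs, and Lemma \ref{rvdn} yields $rvd(K_n-2e)=n$ for $n\ge 6$.

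The main obstacle is not any individual estimate—each count is elementary—but ensuring the case analysis is exhaustive: I must check every placement of $\{x,y\}$ relative to the one or two deleted edges and correctly identify the extremal pair, so that the minimum counts are pinned down as $n-3$ and $n-4$ respectively. These two thresholds are exactly what force the hypotheses $n\ge 5$ and $n\ge 6$, and the sharpness of the case bounds (e.g. that two removed edges sharing a vertex do not create a pair with fewer than $n-4$ common neighbors) is the point that requires the most care.
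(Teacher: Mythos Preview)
Your argument is correct: you verify the common-neighbor criterion of Lemma~\ref{rvdn} directly by an exhaustive case analysis, and the minimum counts $n-3$ (for $K_n-e$) and $n-4$ (for $K_n-2e$) are exactly right, matching the thresholds $n\ge 5$ and $n\ge 6$.

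The paper takes a different route. Rather than counting common neighbors pair by pair, it invokes Theorem~\ref{rvdedgen-1max}: any graph with $rvd(G)\le n-1$ has at most $\binom{n}{2}-n+3$ edges, so once $|E(K_n-2e)|=\binom{n}{2}-2$ exceeds this (which happens for $n\ge 6$), we must have $rvd(K_n-2e)=n$; the case $K_5-e$ is checked the same way, and $K_n-e$ for $n\ge 6$ follows from $K_n-2e$ via Lemma~\ref{rvdsubgraph}. Your approach is more elementary and self-contained---it needs only Lemma~\ref{rvdn} and nothing from Section~4---while the paper's version is shorter on the page but leans on the heavier Theorem~\ref{rvdedgen-1max}. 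Both ultimately rest on the same characterization of $rvd(G)=n$; you unpack it explicitly, the paper packages it through an edge-count consequence.
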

\begin{proof}
If $n=5$, then $|E(K_5-e)|=9$. By Theorem \ref{rvdedgen-1max}, we have $rvd(K_5-e)=5$. For $n\geq 6$, since $E(K_n-2e)=\frac{1}{2}n(n-1)-2$, we have $rvd(K_n-2e)=n$ by Theorem \ref{rvdedgen-1max}. By Lemma \ref{rvdsubgraph}, we have $rvd(K_n-e)\geq rvd(K_n-2e)=n$.
\end{proof}

\begin{lem}\cite{BCLLW}\label{rvd1}
Let $G$ be a nontrivial connected graph. Then $rvd(G)=1$
if and only if $G$ is a tree.
\end{lem}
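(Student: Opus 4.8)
$rvd(G)=1$ if and only if $G$ is a tree.

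Let me think about this carefully.The plan is to prove both implications by first unwinding exactly what a single color forces on the rainbow vertex-cut definition. With only one color available, a vertex subset $S$ is rainbow if and only if $|S|\le 1$, since any two distinct vertices would necessarily share the single color. The decisive consequence concerns the adjacent case: for adjacent vertices $x$ and $y$, the requirement that $S+x$ or $S+y$ be rainbow forces $S=\emptyset$, because adjoining $x$ (or $y$) to any nonempty $S$ produces two equally-colored vertices. I expect this translation step to be the conceptual crux of the argument, as it converts the adjacent clause of the definition into the statement that deleting the single edge $xy$ must already disconnect $x$ from $y$; that is, $xy$ must be a bridge of $G$.

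With this observation in hand, the forward direction is immediate. Assuming $rvd(G)=1$ and applying the observation to an arbitrary edge $xy\in E(G)$, I conclude that $xy$ is a bridge. Since every edge of $G$ is then a bridge, $G$ contains no cycle; being connected and acyclic, $G$ is a tree.

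For the reverse direction, I would let $G$ be a tree and color every vertex with the single color. For adjacent vertices $x$ and $y$, the edge $xy$ is a bridge, so $S=\emptyset$ separates $x$ from $y$ in $G-xy$, and the singleton $\{x\}$ is trivially rainbow. For nonadjacent vertices $x$ and $y$, the unique $x$-$y$ path in the tree contains at least one internal vertex $w$; then $S=\{w\}$ is a rainbow (singleton) vertex-cut whose removal places $x$ and $y$ in different components of $G-S$. Thus one color suffices, and since $G$ is nontrivial we have $rvd(G)\ge 1$, whence $rvd(G)=1$.

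The only genuinely delicate point is the correct reading of the adjacent case—that $S+x$ being rainbow under a single color forces $S=\emptyset$—while the remaining steps reduce to elementary facts about trees: every edge is a bridge, and any two nonadjacent vertices are separated by an internal vertex of the unique path joining them.
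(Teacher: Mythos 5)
Your proof is correct and complete. Note that the paper does not prove this lemma itself---it is quoted from \cite{BCLLW} without proof---so there is no in-paper argument to compare against; your argument is the natural one. The key step, that with a single color the rainbow condition $S+x$ (or $S+y$) forces $S=\emptyset$ for adjacent $x,y$ and hence every edge must be a bridge, is exactly the right reading of the definition, and the converse via bridges and internal vertices of the unique path in a tree is sound.
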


\begin{lem}\label{rvdcomplemmn=57}
If $G$ is a connected graph of order $n$ with $5\leq n\leq 7$ and  $rvd(G)=1$, then $rvd(\overline{G})\geq n-2$ and the lower bound is sharp.
\end{lem}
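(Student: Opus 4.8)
The plan is to first turn the hypotheses into structure. By Lemma \ref{rvd1}, $rvd(G)=1$ forces $G$ to be a tree, and since $\overline{G}$ is assumed connected, $G$ cannot be the star $K_{1,n-1}$ (the complement of a star has an isolated vertex). Thus $G$ is a non-star tree on $n$ vertices; every leaf $x$ of $G$ has $d_{\overline{G}}(x)=n-2$, and two distinct leaves of $G$ are nonadjacent in $G$, hence adjacent in $\overline{G}$. The entire lower bound will come from producing one pair of leaves $x,y$ with $\kappa_{\overline{G}}(x,y)\ge n-2$, because then Lemma \ref{rvdlocalconn} gives $rvd(\overline{G})\ge \kappa^{+}(\overline{G})\ge \kappa_{\overline{G}}(x,y)\ge n-2$.

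For the local-connectivity estimate I would count internally disjoint $x$--$y$ paths in $\overline{G}$ (for the adjacent pair $x,y$ this number equals $\kappa_{\overline{G}}(x,y)$, since adding back the edge $xy$ to a maximum family in $\overline{G}-xy$ realizes $\kappa_{\overline{G}-xy}(x,y)+1$). Write $x',y'$ for the unique $G$-neighbours (supports) of $x,y$. A vertex $w\notin\{x,y\}$ is a common neighbour of $x,y$ in $\overline{G}$ exactly when $w\notin\{x',y'\}$. Hence if $x'=y'$ there are $n-3$ common neighbours, giving the edge $xy$ together with the $n-3$ paths $x\,w\,y$, i.e.\ $n-2$ internally disjoint paths; and if $x'\ne y'$ with $x'y'\notin E(G)$ there are $n-4$ common neighbours, and besides the edge $xy$ and the $n-4$ paths $x\,w\,y$ one also has the path $x\,y'\,x'\,y$ (each of $xy'$, $y'x'$, $x'y$ is a nonedge of $G$, hence an edge of $\overline{G}$), whose internal vertices $x',y'$ are not common neighbours and so are disjoint from the others; again $n-2$ paths. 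It therefore suffices to find two leaves that either share a support or have nonadjacent distinct supports, and this is always possible: if two leaves share a support we are done; otherwise all leaves have distinct supports, and either $G$ has exactly two leaves (so $G=P_n$, whose supports are the vertices $2$ and $n-1$, nonadjacent for $n\ge 5$), or $G$ has at least three leaves whose pairwise-distinct supports cannot be mutually adjacent (a tree has no triangle), so two of them are nonadjacent. This yields $\kappa_{\overline{G}}(x,y)\ge n-2$ and hence $rvd(\overline{G})\ge n-2$.

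For sharpness I would exhibit, for each $n\in\{5,6,7\}$, one tree $G$ with $rvd(\overline{G})=n-2$; the double stars (two adjacent centres carrying respectively $1$ and $2$, $2$ and $2$, $2$ and $3$ leaves) are convenient. For such $G$ the lower bound already gives $rvd(\overline{G})\ge n-2$, so it remains to prove $rvd(\overline{G})\le n-2$. I would first exclude the value $n$ by Lemma \ref{rvdn}: the two centres $u,v$ have no common neighbour in $\overline{G}$, so some pair has fewer than two common neighbours and $rvd(\overline{G})\le n-1$. To descend to $n-2$ I would present an explicit $(n-2)$-colouring whose only repeated colours sit on pairs with at most one common neighbour in $\overline{G}$ (so Lemma \ref{rvddifcolor} is respected) and then verify directly that every vertex pair admits a rainbow vertex-cut; alternatively I would check that $\overline{G}$ violates one of the three conditions of Theorem \ref{rvdn-1item}, which rules out the value $n-1$ and, together with $rvd(\overline{G})\ge n-2$ and $rvd(\overline{G})\ne n$, forces equality.

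The main obstacle is this sharpness half, i.e.\ certifying $rvd(\overline{G})\le n-2$: the lower bound is a clean Menger/connectivity count, but pinning the exact value demands either a hand-built colouring whose disconnection property must be checked over all pairs, or a careful confrontation of $\overline{G}$ with the figure-based conditions of Theorem \ref{rvdn-1item}. I would also record why the range $5\le n\le 7$ is the right one: for $n\ge 8$ one checks (for instance with $G=P_n$) that all pairs of $\overline{G}$ already have at least two common neighbours, so $rvd(\overline{G})=n$ by Lemma \ref{rvdn} and the value $n-2$ is no longer attained, which is precisely why the statement is stated only in this range.
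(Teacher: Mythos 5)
Your lower-bound argument is correct and takes a genuinely different route from the paper. The paper proves $rvd(\overline{G})\geq n-2$ by exhaustively listing all non-star trees for each of $n=5,6,7$ and handling each complement ad hoc (finding $K_4$ or $K_5-e$ subgraphs, exhibiting internally disjoint paths, or quoting computed values), whereas you give a single uniform Menger-type count: pick two leaves whose supports coincide or are nonadjacent (always possible in a non-star tree on $n\geq 5$ vertices), and exhibit $n-3$ internally disjoint paths in $\overline{G}-xy$ to get $\kappa_{\overline{G}}(x,y)\geq n-2$ and conclude via Lemma \ref{rvdlocalconn}. I checked the details (leaves are nonadjacent in $G$ for $n\geq 3$; the common neighbours of $x,y$ in $\overline{G}$ are exactly the vertices outside $\{x,y,x',y'\}$; the extra path $x\,y'\,x'\,y$ is internally disjoint from the length-two paths; the support-selection trichotomy is exhaustive). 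This half of your proof is cleaner and more informative than the paper's, and it works uniformly for all $n\geq 5$.

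The sharpness half, however, contains a genuine error beyond merely being left as a plan: your proposed witness for $n=7$ fails. Take the double star $G$ with centres $u,v$, where $u$ carries leaves $a_1,a_2$ and $v$ carries leaves $b_1,b_2,b_3$. In $\overline{G}$ one computes $N_{\overline{G}}(u)=\{b_1,b_2,b_3\}$ and $N_{\overline{G}}(v)=\{a_1,a_2\}$, and then every pair of vertices of $\overline{G}$ with at most one common neighbour contains $v$: the only such pairs are $(u,v)$, $(v,a_1)$, $(v,a_2)$ (for instance $(u,b_i)$ has the two common neighbours $b_j$, $j\neq i$, and $(u,a_i)$ has three). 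By Lemma \ref{rvddifcolor} every colour class of an rvd-coloring of $\overline{G}$ other than possibly one pair through $v$ is a singleton, and no triple through $v$ qualifies since $(u,a_i)$ and $(a_1,a_2)$ have at least two common neighbours; hence $rvd(\overline{G})\geq n-1=6$, not $n-2$. (The same obstruction kills the $(1,4)$ double star.) This is why the paper uses $P_7$ as the extremal example for $n=7$. So to complete the sharpness claim you must replace the $n=7$ witness and, for each of $n=5,6,7$, actually carry out the verification $rvd(\overline{G})\leq n-2$ that you only sketch — e.g.\ by exhibiting an explicit $(n-2)$-coloring with two disjoint monochromatic pairs each having at most one common neighbour in $\overline{G}$ and checking all rainbow vertex-cuts.
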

\begin{proof}

\begin{figure}[h]
    \centering
    \includegraphics[width=0.6\textwidth]{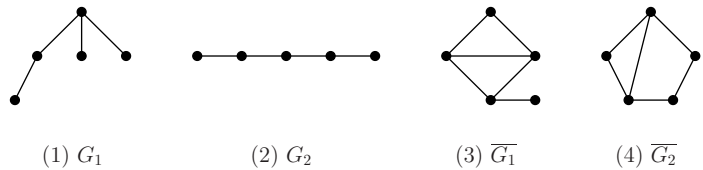}
    \caption{The trees and their complement graphs with order $n=5$.}\label{rvdNGn=5}
\end{figure}

By Lemma \ref{rvd1}, $G$ is a tree. For $n=5$, the graph $G$ is $G_1$ or $G_2$ as shown in Fig. \ref{rvdNGn=5}. Since $rvd(\overline{G_1})=rvd(\overline{G_2})=3$, we have $rvd(\overline{G})=n-2$.

\begin{figure}[h]
    \centering
    \includegraphics[width=0.9\textwidth]{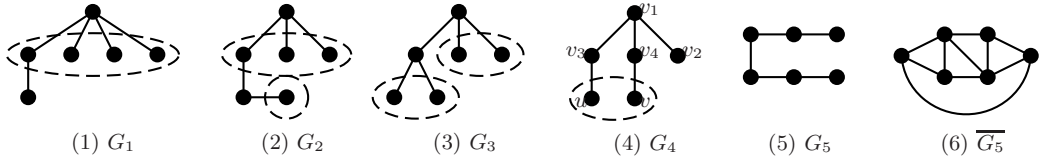}
    \caption{The trees with order $n=6$ and the complement graph of $P_6$.}\label{rvdNGn=6}
\end{figure}

For $n=6$, the graph $G$ is one of $G_1$ through $G_5$ as shown in Fig. \ref{rvdNGn=6}.(1) through (5).
For $G_1$ through $G_3$, the four vertices in dashed line cycle form a $K_4$ in $\overline{G}$. By Lemmas \ref{rvdcomplete} and \ref{rvdsubgraph}, we have $rvd(\overline{G})\geq rvd(K_4)\geq 4$. If $G$ is the graph $G_4$, then there are four internally disjoint paths between $u$ and $v$ in $\overline{G}$, which are paths $uv$, $uv_1v$, $uv_2v$ and $uv_4v_3v$. So, by Lemma \ref{rvdlocalconn} we have $rvd(\overline{G})\geq 4$. If $G$ is the graph $G_5$, then $\overline{G}$ is the graph as shown in Fig. \ref{rvdNGn=6}.(6). We have $rvd(\overline{G_5})=rvd(\overline{P_6})=4$. So, $rvd(\overline{G})\geq n-2$.

\begin{figure}[h]
    \centering
    \includegraphics[width=0.8\textwidth]{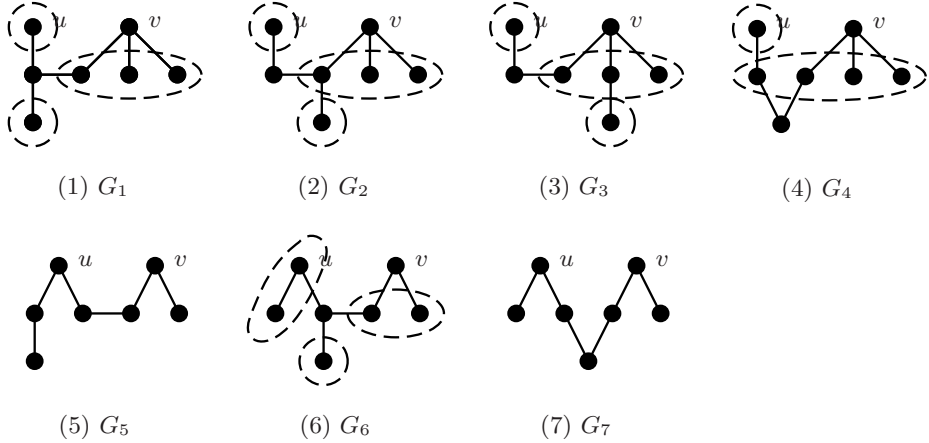}
    \caption{The trees with order $n=7$ and $|E(H)|=2$.}\label{rvdNGn=7}
\end{figure}

For $n=7$, if any two vertices of the graph $\overline{G}$ have at least two common neighbors, then $rvd(\overline{G})=n$ by Lemma \ref{rvdn}. So,  assume that there exist two vertices $x,y$ in $\overline{G}$ with $m_{\overline{G}}(x,y)\leq 1$. Then there is at most one vertex which is not adjacent to $x$ and $y$ in $G$. Let $H=G-x-y$. If $|E(H)|\geq 3$, assuming that $\{e_1,e_2,e_3\}\subseteq E(H)$, then there are at least two edges with the common vertex, say $e_1=v_1v_2$ and $e_2=v_2v_3$. Since $G$ is a tree, we have one vertex $t\in \{v_1,v_2,v_3\}$ but $t\not \in N_{G}(x)\cup N_{G}(y)$ and there is a path $P_1=xe_1y$ or $xe_2y$ or $xe_1e_2y$ in $G$. So, the endpoints of $e_3$ are adjacent to $x$ and $y$, respectively. There is a path $P_2=xe_3y$. Then according to $P_1$ and $P_2$, there is a cycle in graph $G$, a contradiction. So, $|E(H)|\leq 2$. When $|E(H)|\leq 1$, we have $\overline{H}=K_5$ or $K_5-e$. By Lemmas \ref{rvdsubgraph}, \ref{rvdcomplete} and \ref{rvdkn-e}, we have $rvd(\overline{G})\geq rvd(\overline{H})=5$. When $|E(H)|=2$, $G$ is as shown in Fig. \ref{rvdNGn=7}. If $G$ is one of $G_1$ through $G_4$ or $G_6$, then the vertices in dashed line cycle form a $K_5$ or $K_5-e$ in $\overline{G}$. So, $rvd(G)\geq 5$ by Lemmas \ref{rvdcomplete} and \ref{rvdkn-e}. If $G$ is $G_5$ or $G_7$, i.e. $P_7$, then we have $rvd(\overline{G})=rvd(\overline{P_7})=5$. \end{proof}

\begin{lem}\label{rvd1comple}
If $G$ is a connected graph of order $n\geq 8$ and $rvd(G)=1$, then $rvd(\overline{G})\geq n-1$ and the lower bound is sharp.
\end{lem}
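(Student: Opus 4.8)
The plan is to combine Lemma \ref{rvd1}, which tells us that $G$ is a tree (so $\sum_{v} d_G(v)=2(n-1)$ and, since $G$ is acyclic, any two vertices of $G$ share at most one common neighbour), with Lemma \ref{rvddifcolor} applied to the dense graph $\overline{G}$. The first step is to record the exact count of common neighbours in the complement: for $u,v\in V(G)$,
$$m_{\overline{G}}(u,v)=(n-2)-d_G(u)-d_G(v)+2[uv\in E(G)]+m_G(u,v),$$
obtained by inclusion--exclusion on the vertices adjacent in $G$ to neither $u$ nor $v$. Call a pair $\{u,v\}$ \emph{bad} if $m_{\overline{G}}(u,v)\le 1$; the identity shows a bad pair forces $d_G(u)+d_G(v)\ge n-3$. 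By Lemma \ref{rvddifcolor}, in any $rvd$-coloring of $\overline{G}$ two vertices of the same colour must form a bad pair. Hence if $rvd(\overline{G})\le n-2$, the colour classes have defect at least $2$, so there is either one class of size $\ge 3$ (a bad \emph{triple}, all three pairs bad) or two classes of size $2$ (two disjoint bad pairs). Thus it suffices to prove that, for $n\ge 8$, the complement of a tree contains neither configuration; then every $rvd$-coloring has defect at most $1$, giving $rvd(\overline{G})\ge n-1$.

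Second, I would rule out two disjoint bad pairs by a degree count. Four distinct vertices carrying two bad pairs contribute degree at least $2(n-3)$, while the remaining $n-4$ vertices each have degree at least $1$ since $G$ is connected; comparing with $\sum_v d_G(v)=2n-2$ gives $2n-2\ge 2(n-3)+(n-4)$, i.e. $n\le 8$. So for $n\ge 9$ this is already impossible, and only the boundary $n=8$ remains. There the equality forces the four ``other'' vertices to be leaves and each bad pair to be non-adjacent with no common neighbour (distance $\ge 3$ in $G$). The non-leaves then induce a subtree on four vertices, necessarily $P_4$ or $K_{1,3}$, and in either case at most one pair of them lies at distance $\ge 3$, so two disjoint bad pairs cannot occur.

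Third, and this is the delicate step, I would eliminate bad triples by pairing the degree lower bound with a neighbourhood upper bound. Summing the three bad-pair inequalities for $\{x,y,z\}$ gives $2\bigl(d_G(x)+d_G(y)+d_G(z)\bigr)\ge 3(n-3)+2e+T$, where $e$ counts the edges among the three vertices and $T=m_G(x,y)+m_G(y,z)+m_G(x,z)$. On the other hand $\{x,y,z\}\cup N_G(x)\cup N_G(y)\cup N_G(z)$ has at most $n$ vertices, which by inclusion--exclusion (using that a tree has no triangle and no two vertices with two common neighbours) yields $d_G(x)+d_G(y)+d_G(z)\le n-3+T-\tau+a$, where $\tau$ and $a$ record the triple-common neighbour and the internal incidences. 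Combining the two bounds reduces to $n\le 3-2e+T-2\tau+2a$, and a short case check on $e\in\{0,1,2\}$, noting that the acyclicity of $G$ caps $T$ in each pattern (for instance $T\le 1$ when $e=1$, and $T=1$ when $e=2$, since larger $T$ would create a cycle), forces $n\le 6$ in every case. Hence no bad triple exists for $n\ge 8$.

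Finally, for sharpness I would exhibit the double star with two adjacent centres $a,b$ carrying $s\ge 3$ and $t\ge 3$ leaves respectively, so $s+t+2=n\ge 8$: its complement is connected, and $\{a,b\}$ is a bad pair with $m_{\overline{G}}(a,b)=0$, so Lemma \ref{rvdn} gives $rvd(\overline{G})\le n-1$, matching the lower bound. I expect the main obstacle to be the bad-triple elimination, namely getting the inclusion--exclusion count of $\lvert N_G(x)\cup N_G(y)\cup N_G(z)\rvert$ correct and verifying that the tree's acyclicity genuinely caps $T$ and $\tau$ in each adjacency pattern; the boundary case $n=8$ for disjoint pairs is the second place where care is needed.
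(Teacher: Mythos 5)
Your route is genuinely different from the paper's. The paper fixes a single pair $x,y$ with $m_{\overline{G}}(x,y)\le 1$ and runs a long structural case analysis on the tree around $x$ and $y$ (splitting on whether $xy\in E(G)$, whether $M_G(x,y)=\emptyset$, and whether some vertex avoids both neighbourhoods) to show that $V(\overline{G})\setminus\{x\}$ or $V(\overline{G})\setminus\{y\}$ is forced to be rainbow by Lemma \ref{rvddifcolor}. You instead argue globally: a rainbow vertex-disconnection coloring of $\overline{G}$ with at most $n-2$ colors forces either a bad triple or two disjoint bad pairs, and you exclude both by counting against $\sum_v d_G(v)=2n-2$. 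Your inclusion--exclusion identity for $m_{\overline{G}}(u,v)$ is correct, the two-disjoint-pairs count correctly yields $n\le 8$, and the bad-triple bound $n\le 3+T-2\tau+2a-2e$ with $(e,a)\in\{(0,0),(1,2),(2,3)\}$, together with the acyclicity constraints ($T=3$ forces $\tau=1$ when $e=0$; $T\le 1$ when $e=1$; $T=1$, $\tau=0$ when $e=2$), does give $n\le 6$ in every case. This is a more uniform argument than the paper's, and your sharpness example (the double star, for which $m_{\overline{G}}(a,b)=0$ and Lemma \ref{rvdn} gives $rvd(\overline{G})\le n-1$) works; the paper uses a broom instead.

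The one step that fails as written is the boundary case $n=8$ for two disjoint bad pairs. Equality in $2n-2\ge 2(n-3)+(n-4)$ forces the four vertices outside the two bad pairs to be leaves, but it does not force the four bad-pair vertices to be the non-leaves of $G$: each bad pair has degree sum exactly $5$, which can split as $(1,4)$ as well as $(2,3)$, so a bad-pair vertex may itself be a leaf and $G$ may have only two or three non-leaves (the double star with two centres of degree $4$ has exactly two). Your claim that ``the non-leaves induce a subtree on four vertices, necessarily $P_4$ or $K_{1,3}$'' therefore covers only the $(2,3)+(2,3)$ split. The conclusion still holds in the remaining splits --- for $(1,4)+(1,4)$ the tree is the double star and every leaf is within distance $2$ of each centre, and for $(1,4)+(2,3)$ the degree-$2$ and degree-$3$ vertices lie on the $3$-vertex path induced by the non-leaves, hence are at distance at most $2$ and cannot satisfy the distance-$\ge 3$ condition that equality imposes on a bad pair --- but these subcases must be added for the argument to be complete.
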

\begin{proof}
By Lemma \ref{rvd1}, $G$ is a tree. Assume $n\geq 8$. If any two vertices of the graph $\overline{G}$ have at least two common neighbors, then $rvd(\overline{G})=n$ by Theorem \ref{rvdn}. So, we consider that there exist two vertices $x$ and $y$ in $\overline{G}$ with $m_{\overline{G}}(x,y)\leq 1$. Then there is at most one vertex which is not adjacent to $x$ and $y$ in $G$. Without loss of generality, let $N_{G}(y)\geq N_{G}(x)$. Then $|N_{G}(y)\setminus\{x\}|\geq 3$. Let $H=G-x-y$.

If $x$ and $y$ have no common nonadjacent vertex in $G$, then $V(H)\subseteq N_{G}(x)\cup N_{G}(y)$. Since there exists at most one edge in graph $H$, we have $rvd(\overline{H})=n-2$ by Lemmas \ref{rvdcomplete} and \ref{rvdkn-e}. Let $v\in V(H)$.
If $M_G(x,y)=\{s\}$, then $N_{G}(y)\geq 4$. We have $N_{G}(y)\setminus\{s,v\}\subseteq M_{\overline{G}}(x,v)$. So,  $m_{\overline{G}}(x,v)\geq 2$. By Lemma \ref{rvddifcolor}, we obtain $rvd(\overline{G})\geq n-1$.
Consider $M_G(x,y)=\emptyset$. If $xy\in E(G)$, then $N_{G}(y)\setminus \{x,v\}\subseteq M_{\overline{G}}(x,v)$. So, $m_{\overline{G}}(x,v)\geq 2$ and $rvd(\overline{G})\geq n-1$. If $xy\not \in E(G)$, then there is an edge $v_1v_2\in E(G)$ with $v_1\in N_G(x)$ and $v_2\in N_G(y)$. Then $N_{G}(y)\setminus \{v_2\}\subseteq M_{\overline{G}}(x,v)$ for $v\in N_G(x)$ and $N_{G}(y)\setminus \{v\}\subseteq M_{\overline{G}}(x,v)$ for $v\in V(H)\setminus N_G(x)$. So, $m_{\overline{G}}(x,v)\geq 2$ for $v\in V(H)$ and $rvd(\overline{G})\geq n-1$.

Now we consider that there exists a vertex $t$ which is not adjacent to $x$ and $y$ in $G$. If $xy\in E(G)$, then the vertex $t$ is adjacent to a vertex $t'$ of $N_{G}(x)\setminus\{y\}$ or $N_{G}(y)\setminus\{x\}$ in $G$. So, $rvd(\overline{H})=rvd(K_{n-2}-e)=n-2$ by Lemma \ref{rvdkn-e}. If $t'\in N_{G}(x)\setminus\{y\}$ in $G$, then $N_{G}(y)\setminus\{x,v\}\subseteq M_{\overline{G}}(x,v)$; if $t'\in N_{G}(y)\setminus\{x\}$ in $G$, then $N_{G}(y)\setminus\{x,t'\}\subseteq M_{\overline{G}}(x,v)$ for $v=t$ and $N_{G}(y)\setminus\{x,v\}\subseteq M_{\overline{G}}(x,v)$ for $v\in V(H)\setminus\{t\}$. We have $m_{\overline{G}}(x,v)\geq 2$ for $v\in V(H)$. So $rvd(\overline{G})\geq n-1$.

Assume $xy\not\in E(G)$. If $M_G(x,y)=\{s'\}$, then $t$ is adjacent to a  vertex $h_1$ from $V(H)$ in $G$. We have $rvd(\overline{H})=rvd(K_{n-2}-e)=n-2$ by Lemma \ref{rvdkn-e}. If $h_1\in N_{G}(x)$ or $N_{G}(y)\geq 4$ in $G$, then $N_{G}[y]\setminus\{s',h_1\}\subseteq M_{\overline{G}}(x,t)$, $N_{G}(y)\setminus\{s'\}\subseteq M_{\overline{G}}(x,v)$ for $v\in N_{G}(x)$ and $N_{G}(y)\cup\{t\}\setminus\{v,s'\}\subseteq M_{\overline{G}}(x,v)$ for $v\in N_{G}(y)\setminus\{s'\}$. So, by Lemma \ref{rvddifcolor} we have that $V(\overline{G})\setminus\{y\}$ is rainbow. If $h_1\in N_{G}(y)$ and $N_{G}(y)=3$ in $G$, then $N_{G}(x)=3$. By symmetry, we know that $V(\overline{G})\setminus\{x\}$ is rainbow. Thus, $rvd(\overline{G})\geq n-1$. If $M_G(x,y)=\emptyset$, then $rvd(\overline{H})=rvd(K_{n-2}-2e)=n-2$ by Lemma \ref{rvdkn-e}. Let $h_2$ be the neighbor of $t$ in $N_{G}(y)$ (if $h_2$ does not exist, then let $\{h_2\}=\emptyset$). Let $v_xv_y$ be the edge of $G$ with $v_x\in N_G(x)$ and $v_y\in N_G(y)$ (if $v_xv_y$ does not exist, then let $\{v_x\}=\{v_y\}=\emptyset$). We have $N_{G}(y)\setminus\{h_2\}\subseteq M_{\overline{G}}(x,t)$, $N_{G}(y)\setminus\{v_y\}\subseteq M_{\overline{G}}(x,v_x)$
and $N_{G}(y)\setminus\{v\}\subseteq M_{\overline{G}}(x,v)$ for $v\in V(H)\setminus\{t,v_x\}$. So, $m_{\overline{G}}(x,v)\geq 2$ for $v\in V(H)$ and $rvd(\overline{G})\geq n-1$.

For the sharpness of the lower bound, let $G_0$ be a tree obtained from $K_{1,n-2}$ by adding a new vertex which is adjacent to one of the  leaves. Then we have $rvd(\overline{G_0})=n-1$.
\end{proof}

A \emph{block} of a graph is a maximal connected induced subgraph of $G$ containing no cut vertices. A \emph{end-block} is a block with exactly one cut vertex of $G$.

\begin{lem}\cite{BCLLW}\label{rvd2}
Let $G$ be a nontrivial connected graph. Then $rvd(G)=2$
if and only if each block of $G$ is either $K_2$ or a cycle and at least one block of $G$ is a cycle.
\end{lem}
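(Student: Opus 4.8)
My plan is to prove the two implications separately, getting the forward direction almost for free from the connectivity bound of Lemma~\ref{rvdlocalconn} and the converse by exhibiting a concrete $2$-coloring built over the block tree of $G$. For the forward direction, suppose $rvd(G)=2$. By Lemma~\ref{rvdlocalconn} we have $\kappa^+(G)\le rvd(G)=2$, so every two vertices of $G$ have local connectivity at most $2$. Since $rvd(G)\neq 1$, Lemma~\ref{rvd1} shows $G$ is not a tree, so at least one block is not a $K_2$; it remains to show every block is either $K_2$ or a cycle. Assume instead that some block $B$ is $2$-connected but not a cycle; then $B$ has a vertex of degree at least $3$ in $B$. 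Using the open-ear decomposition of a $2$-connected graph (a cycle together with added ears), the presence of such a vertex forces a subdivided theta-subgraph, i.e.\ two vertices $u,w$ of $B$ joined by three internally disjoint paths. As $B\subseteq G$ these paths live in $G$, so if $u,w$ are nonadjacent then $\kappa_G(u,w)\ge 3$, while if they are adjacent then at most one of the three paths is the edge $uw$ and the other two give $\kappa_{G-uw}(u,w)\ge 2$, whence $\kappa_G(u,w)\ge 3$. Either way $\kappa^+(G)\ge 3$, a contradiction. Thus every block is $K_2$ or a cycle, and at least one is a cycle.

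For the converse, suppose every block of $G$ is a $K_2$ or a cycle and at least one block is a cycle; then $G$ is not a tree, so $rvd(G)\ge 2$ by Lemma~\ref{rvd1}, and I only need a rainbow vertex-disconnection coloring with two colors. The key observation is that the only genuinely constrained pairs are those lying in a common cycle block: if $x$ and $y$ have no common block, they are nonadjacent (every edge lies in a single block), and a standard block-tree argument yields a cut vertex $w\notin\{x,y\}$ separating them, so the singleton $\{w\}$ is automatically a rainbow $x$-$y$ vertex-cut; a $K_2$ block separates its two endpoints via the empty cut after deleting the edge. Moreover, since any detour of $G$ through other blocks re-enters a cycle at the same cut vertex it left, a separation achieved inside a cycle block persists in all of $G$. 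Hence it suffices to two-color each cycle block \emph{well}, meaning that for every nonadjacent pair $x,y$ on the cycle the remaining vertices are not monochromatic (so the two arcs supply two differently colored cut vertices), and for every monochromatic adjacent pair the second color still appears somewhere on the cycle.

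To produce a global good coloring I root the block tree and color blocks from the root outward; when a block is reached, exactly one of its vertices (the cut vertex joining it to its parent) is already colored, and I extend this to a good coloring of that cycle. The extension is always possible: for $C_3$ place the two colors on any two of the three vertices; for $C_4$ use the pattern $A,A,B,B$ around the cycle, which respects Lemma~\ref{rvddifcolor} on the two diagonal pairs; for longer cycles place both colors so that each color class has size at least $3$ or the minority color occupies two adjacent vertices. Because blocks meeting at a cut vertex share only that vertex, no pair of vertices outside a $C_4$ has two common neighbors, so Lemma~\ref{rvddifcolor} imposes no cross-block constraints and these per-block colorings combine consistently into a $2$-coloring of $G$; together with the observations above this gives $rvd(G)\le 2$.

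The step I expect to be the main obstacle is the converse, specifically verifying that the local ``good'' cycle coloring simultaneously handles \emph{all} adjacent and nonadjacent intra-cycle pairs (including the forced $C_4$ constraint from Lemma~\ref{rvddifcolor}) and that each local cut genuinely remains a cut in the whole graph. The delicate book-keeping lies in separating the small cases $C_3$ and $C_4$ from longer cycles and in confirming that a single pre-colored cut vertex never blocks a good extension. By contrast, the forward direction is comparatively routine once the theta-subgraph is extracted from a non-cycle block.
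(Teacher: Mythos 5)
The paper does not prove this lemma: it is imported verbatim from \cite{BCLLW} as a known result, so there is no in-paper argument to compare yours against. Judged on its own, your proof is correct and self-contained. The forward direction is clean: $\kappa^+(G)\le rvd(G)=2$ from Lemma~\ref{rvdlocalconn}, and a $2$-connected block that is not a cycle contains a theta-subgraph (first ear of an ear decomposition), whose three internally disjoint paths force $\kappa_G(u,w)\ge 3$ under the paper's convention for adjacent pairs as well; Lemma~\ref{rvd1} rules out the all-$K_2$ case. The converse is the right construction, and the points you flag as delicate do all check out: for a pair in no common block the separating cut vertex can indeed be taken distinct from both (otherwise they would share a block), so a singleton rainbow cut exists; a cut inside a cycle block persists in $G$ because any excursion out of a block must return through the same cut vertex; and for a nonadjacent pair on a cycle, ``the remaining vertices are not monochromatic'' really does yield one vertex of each color on the two arcs (if every cross-arc pair were monochromatic the whole remainder would be). Your per-cycle conditions (minority class of size at least $3$, or of size $2$ on adjacent vertices; the forced $A,A,B,B$ on $C_4$; both colors on $C_3$) are exactly what is needed, and a single pre-colored cut vertex never obstructs such an extension, so the rooted block-tree sweep goes through. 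The only cosmetic gaps are that you treat non-monochromatic adjacent pairs on a cycle implicitly (fine, since $S+x$ or $S+y$ is then automatically rainbow) and that a fully formal write-up should spell out the $n=5,6$ cycle cases; neither affects correctness.
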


\begin{lem}\label{rvd2comple}
If $G$ is a connected graph of order $n$ and $rvd(G)=2$, then $rvd(\overline{G})\geq n-3$.
\end{lem}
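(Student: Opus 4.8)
The plan is to reduce the problem to counting the high-degree vertices of $G$ and to exploit the sparse, cactus-like structure guaranteed by Lemma \ref{rvd2}. First I would record the two structural facts about $G$ that drive everything. Since by Lemma \ref{rvd2} every block of $G$ is either $K_2$ or a cycle, any two vertices of $G$ lie in at most one common block, and from this one checks that they have at most two common neighbors in $G$ (exactly two only when they are opposite vertices of a $C_4$-block, and at most one when they are adjacent). This bounded co-degree in $G$ is the key input.

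Next I would translate ``sharing a color in $\overline{G}$'' into a degree condition in $G$. Suppose $u$ and $v$ receive the same color in some $rvd$-coloring of $\overline{G}$. By Lemma \ref{rvddifcolor}, $u$ and $v$ then have at most one common neighbor in $\overline{G}$. A common neighbor of $u,v$ in $\overline{G}$ is precisely a vertex $w\notin\{u,v\}$ nonadjacent in $G$ to both $u$ and $v$, so a short inclusion--exclusion count in $G$ shows that the number of such $w$ equals $n-|N_G(u)\cup N_G(v)|$ when $uv\in E(G)$ and $n-|N_G(u)\cup N_G(v)|-2$ when $uv\notin E(G)$. In either case the condition ``at most one common neighbor in $\overline{G}$'' forces $d_G(u)+d_G(v)\ge |N_G(u)\cup N_G(v)|\ge n-3$.

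Call a vertex $u$ of $G$ \emph{big} if $d_G(u)\ge\frac{n-3}{2}$, and let $B$ be the set of big vertices. The degree condition above shows that in any color class of an $rvd$-coloring of $\overline{G}$, at most one vertex fails to be big, since two non-big vertices in one class would have degree sum less than $n-3$. Hence, writing the color classes as $I_1,\dots,I_t$ with $t=rvd(\overline{G})$, we get $|I_j|-1\le|I_j\cap B|$ for every $j$, and summing over $j$ yields $n-rvd(\overline{G})\le|B|$. So it remains to prove $|B|\le 3$. Using the co-degree bound together with the Bonferroni inequality, if $u_1,\dots,u_b$ are big then $n\ge\bigl|\bigcup_i N_G(u_i)\bigr|\ge b\cdot\frac{n-3}{2}-2\binom{b}{2}$; taking $b=4$ gives $n\le 18$, so $|B|\le 3$ whenever $n\ge 19$, which yields $rvd(\overline{G})\ge n-|B|\ge n-3$. (The same computation with $b=3$ gives $|B|\le 2$, hence the stronger bound $rvd(\overline{G})\ge n-2$, for $n\ge 22$.)

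The main obstacle I anticipate is twofold. The delicate structural step is verifying the co-degree bound rigorously from the block decomposition, in particular ruling out three internally disjoint length-two paths between a pair of vertices, which cannot occur once every block is a single cycle. The second loose end is the finite range $4\le n\le 18$, where the neighborhood-union counting no longer forces $|B|\le 3$; there I would argue directly, using that a big vertex in this range is a cut vertex whose incident blocks partition most of $V(G)$, so only very few big vertices can coexist, and checking the remaining small configurations by hand.
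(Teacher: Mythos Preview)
Your counting route is genuinely different from the paper's argument and, for $n\ge 19$, it is correct: the co-degree bound in a cactus (at most two common $G$-neighbours, at most one when adjacent) follows exactly as you say from $\kappa_G(u,v)\le 2$, and the Bonferroni step then pins $|B|\le 3$. The paper, by contrast, avoids any global counting: it simply exhibits one pair $x,y$ with $\kappa_{\overline{G}}(x,y)\ge n-3$ and invokes Lemma~\ref{rvdlocalconn}. Concretely, if $G=C_n$ then $\delta(\overline{G})=n-3$ already suffices; otherwise one takes an end-block $B$ of $G$ and two non-cut vertices $x,y$ of $B$, observing that in $\overline{G}$ they have at least $n-4$ common neighbours together with one extra internally disjoint path (or the edge $xy$), which forces $\kappa_{\overline{G}}(x,y)\ge n-3$. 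This local-connectivity argument is uniform in $n$ and about three lines per case.

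The genuine gap in your proposal is the window $4\le n\le 18$. Your Bonferroni inequality gives no control there, and your suggested patch---``a big vertex in this range is a cut vertex whose incident blocks partition most of $V(G)$''---is not actually true at the low end: for $n=7$ the threshold $d_G(u)\ge (n-3)/2$ is just $d_G(u)\ge 2$, satisfied by every interior vertex of a cycle block, so ``big'' vertices need not be cut vertices and $|B|$ can be large. Handling these fifteen values of $n$ by exhaustive case analysis is possible in principle but would be long and unilluminating. The cleanest fix is precisely what the paper does: instead of bounding $|B|$, pick two vertices of small $G$-degree in a single end-block and lower-bound their local connectivity in $\overline{G}$ directly. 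That one observation replaces both your Bonferroni step and the entire small-$n$ case check, and in fact subsumes your large-$n$ argument too.
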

\begin{proof}
If $G=C_n$, then $\delta(\overline{G})=n-3$. By Theorem \ref{rvdbound}, we have $rvd(\overline{G})\geq n-3$. Assume that $G$ is not $C_n$. By Lemma \ref{rvd2}, there exist at least two end-blocks $B_i$ and $B_j$ which are a $K_2$ or a cycle. If $B_i$ and $B_j$ are both $K_2$, then we select one of the endpoints with degree one in $G$ from $B_i,B_j$,  respectively, say $x$ and $y$. Then $x,y$ are adjacent in $\overline{G}$ and  $M_{\overline{G}}(x,y)=V(\overline{G})\setminus\{N_{G}[x],N_{G}[y]\}$. So, $\kappa_{\overline{G}}(x,y)\geq n-3$. Thus, $rvd(\overline{G})\geq n-3$ by Lemma \ref{rvdlocalconn}. Assume that $B_i$ is a cycle. If $B_i$ is a triangle $v_1v_2v_3v_1$, where $v_3$ is a cut vertex of $G$, then $M_{\overline{G}}(v_1,v_2)=V(\overline{G})\setminus\{v_1,v_2,v_3\}$. So,  $rvd(\overline{G})\geq n-3$. If $B_i$ is a $C_4=v_1v_2v_3v_4v_1$, where $v_4$ is a cut vertex of $G$, then $M_{\overline{G}}(v_1,v_3)=V(\overline{G})\setminus\{v_1,v_2,v_3,v_4\}$.
Since $v_1,v_3$ are adjacent in $\overline{G}$, we have $rvd(\overline{G})\geq \kappa_{\overline{G}}(v_1,v_3)\geq n-3$ by Lemma \ref{rvdlocalconn}. If $B_i$ is a cycle $C_t=v_1v_2\cdots v_{t-1}v_tv_1$ with order $t\geq 5$, where $v_t$ is a cut vertex of $G$, then $M_{\overline{G}}(v_1,v_2)=V(\overline{G})\setminus\{v_1,v_2,v_3,v_t\}$.
Since there is a path $v_1v_3v_tv_2$ in $\overline{G}$, we have $rvd(\overline{G})\geq \kappa_{\overline{G}}(v_1,v_2)\geq n-3$ by Lemma \ref{rvdlocalconn}.
\end{proof}

\begin{thm}\label{rvdNGmulti}
Let $G$ and $\overline{G}$ be connected graphs of order $n$.
\item 1. If $n=4$, then $rvd(G)\cdot rvd(\overline{G})=1$.
\item 2. If $5\leq n\leq 7$, then $n-2\leq rvd(G)\cdot rvd(\overline{G})\leq n^2$ and the lower bound is sharp.
\item 3. If $n\geq 8$, then $n-1\leq rvd(G)\cdot rvd(\overline{G})\leq n^2$. Furthermore, the lower bound is sharp and the upper bound is sharp for $n\geq 12$.
\end{thm}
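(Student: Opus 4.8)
The plan is to prove all three parts through a single case analysis on $\min\{rvd(G),rvd(\overline{G})\}$, assuming without loss of generality that $rvd(G)\le rvd(\overline{G})$. The upper bound $rvd(G)\cdot rvd(\overline{G})\le n^2$ is immediate from $rvd\le n$ (Lemma \ref{rvdlocalconn}). For Part 1, when $n=4$ both $G$ and $\overline{G}$ are connected while sharing only $\binom{4}{2}=6$ edges, so each has exactly three edges and is a tree; by Lemma \ref{rvd1} both have $rvd=1$, so the product is $1$. For the lower bounds I would split on whether $rvd(G)\in\{1,2\}$ or $rvd(G)\ge 3$. If $rvd(G)=1$ then $G$ is a tree and Lemma \ref{rvdcomplemmn=57} (for $5\le n\le 7$) or Lemma \ref{rvd1comple} (for $n\ge 8$) gives $rvd(\overline{G})\ge n-2$, respectively $n-1$, so the product is at least $n-2$, respectively $n-1$; the extremal trees furnished by those lemmas realize the bounds, giving product exactly $n-2$ in Part 2 and $n-1$ in Part 3, which is precisely where the two ranges diverge. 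If $rvd(G)=2$, Lemma \ref{rvd2comple} gives $rvd(\overline{G})\ge n-3$, so the product is at least $2(n-3)\ge n-1$ for all $n\ge 5$.

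The heart of the argument is the case $rvd(G)\ge 3$, where I must reach at least $n-2$ (Part 2) or $n-1$ (Part 3). For $5\le n\le 10$ this is trivial, since the product is already at least $9\ge n-1$, so assume $n\ge 11$. Let $d_1\le\cdots\le d_n$ be the degree sequence of $G$. I would use three facts: (i) $rvd(G)\ge\delta(G)=d_1$ and $rvd(\overline{G})\ge\delta(\overline{G})=n-1-d_n$ from Theorem \ref{rvdbound}; (ii) taking $x,y$ to be the two smallest-degree vertices of $G$, their $n-2-d_1-d_2$ common non-neighbours yield internally disjoint length-two $x$–$y$ paths in $\overline{G}$, whence $rvd(\overline{G})\ge\kappa^+(\overline{G})\ge n-2-d_1-d_2$ (and symmetrically $rvd(G)\ge d_{n-1}+d_n-n$); and (iii) for any connected subgraph $H$ of $G$, $rvd(G)\ge rvd(H)\ge\delta(H)$ by Lemma \ref{rvdsubgraph} and Theorem \ref{rvdbound}. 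Since $rvd(\overline{G})\ge rvd(G)\ge 3$, each of the four conditions $d_1\ge\frac{n-1}{3}$, $d_n\le\frac{2(n-1)}{3}$, $d_1+d_2\le\frac{2n-5}{3}$, $d_{n-1}+d_n\ge\frac{4n-1}{3}$ already forces one factor to be at least $\frac{n-1}{3}$ via (i) or (ii), hence the product to be at least $3\cdot\frac{n-1}{3}=n-1$.

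The remaining, and genuinely delicate, case is when all four inequalities fail: then $G$ has one low-degree vertex $u$ with $d_1<\frac{n-1}{3}$ and one high-degree vertex $w$ with $d_n>\frac{2(n-1)}{3}$, while the other $n-2$ vertices have degree strictly between $\frac{n-4}{3}$ and $\frac{2n+1}{3}$. Here I would delete $u$: each remaining vertex keeps degree at least $d_2-1>\frac{n-7}{3}$, so taking $H$ to be any connected component of $G-u$ and applying (iii) gives $rvd(G)>\frac{n-7}{3}$; symmetrically, deleting the $\overline{G}$-minimum vertex $w$ gives $rvd(\overline{G})>\frac{n-7}{3}$. Thus the product exceeds $\big(\frac{n-7}{3}\big)^2$, which is at least $n-1$ once $n\ge 21$. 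I expect the main obstacle to be exactly this case in the moderate range $11\le n\le 20$, where the additive losses in the degree estimates must be tracked more tightly (for instance, a middle vertex cannot be adjacent to both $u$ and $w$ and simultaneously help both bounds be loose) or the finitely many values checked by hand; this bookkeeping should be the most laborious step of the whole proof.

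Finally, for the sharpness of the upper bound when $n\ge 12$ I would exhibit, for each such $n$, a graph $G$ with $rvd(G)=rvd(\overline{G})=n$. By Lemma \ref{rvdn} this reduces to constructing $G$ in which every pair of vertices has at least two common neighbours and at least two common non-neighbours (the latter being common neighbours in $\overline{G}$). A near-regular circulant on $\mathbb{Z}_n$ of degree about $n/2$, with a symmetric connection set chosen so that both it and its complement are "doubly covering,'' is a natural candidate, and the two-common-neighbour condition for $G$ and for $\overline{G}$ is then verified by a routine counting of overlaps of translates of the connection set; Theorem \ref{rvdrandom-n} moreover guarantees that such graphs are abundant for all large $n$. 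For any such $G$ one gets $rvd(G)\cdot rvd(\overline{G})=n^2$, matching the upper bound and completing Part 3.
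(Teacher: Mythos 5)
Your skeleton agrees with the paper's (dispose of $n=4$ directly, invoke Lemmas \ref{rvdcomplemmn=57}, \ref{rvd1comple} and \ref{rvd2comple} when $\min\{rvd(G),rvd(\overline{G})\}\le 2$, then attack $rvd(G),rvd(\overline{G})\ge 3$ with degree estimates), but the core case contains a genuine gap, and it is exactly the one you flag yourself. Your final bound $rvd(G)\cdot rvd(\overline{G})>\left(\frac{n-7}{3}\right)^2$ only reaches $n-1$ for $n\ge 21$; even exploiting integrality of $rvd$ it still fails for $n=11,\dots,15$ and $n=18$. "Check the finitely many values by hand" is not an available move here: for each such $n$ the objects to be checked are all graphs on $n$ vertices, not a finite list of numbers, and "track the additive losses more tightly" is a hope, not an argument (pushing your own estimates to their limit, $rvd\ge\max\{d_1,\;(2n-8)/3-d_1\}\ge(n-4)/3$ on each side, still leaves $n=11,12,13$ open). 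The paper closes the whole range $n\ge 11$ with an idea you do not have: set $f=\lceil\frac{n-1}{\delta(G)}\rceil$ (replaced by $\lceil\frac{n-1}{3}\rceil$ when $\delta(G)\le 3$), and split on whether $rvd(\overline{G})\ge f$. If yes, the product is at least $\delta(G)\cdot f\ge n-1$ (or $3f\ge n-1$). If $rvd(\overline{G})\le f-1$, then for the minimum-degree vertex $u$ and every $v\notin N_G[u]$, $u$ and $v$ are adjacent in $\overline{G}$, so $m_{\overline{G}}(u,v)\le\kappa_{\overline{G}}(u,v)-1\le rvd(\overline{G})-1\le f-2$; hence every vertex of $H=G-N_G[u]$ has degree at least $|H|-1-(f-2)=n-\delta(G)-f$ inside $H$, and Lemma \ref{rvdsubgraph} with Theorem \ref{rvdbound} give $rvd(G)\ge n-\delta(G)-f$. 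Multiplying by $rvd(\overline{G})\ge\max\{3,\delta(\overline{G})\}\ge\max\{3,\delta(G)\}$ yields $n-1$ for all $n\ge 11$. The decisive point is that the \emph{hypothesis} that $rvd(\overline{G})$ is small is itself converted, via common neighbours in $\overline{G}$, into a large minimum degree of a subgraph of $G$ — information that cannot be extracted from the degree sequence of $G$ alone, which is why your purely degree-sequence case analysis bottoms out.

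A second, smaller gap: your sharpness argument for the upper bound is not a construction. Theorem \ref{rvdrandom-n} is asymptotic and certifies nothing for a fixed $n\ge 12$, and "a near-regular circulant with a doubly covering connection set" is a candidate whose two-common-neighbour property (in both $G$ and $\overline{G}$) you never verify. The paper exhibits the graph explicitly: write $n=4k+t$ with $k\ge 3$ and $t\in\{0,1,2,3\}$, partition $V(G)$ into four cliques of sizes $k,k,k,k+t$, and add the transversal cliques $\{v_i,v_{k+i},v_{2k+i},v_{3k+i}\}$ for $i\in[k-1]$ together with the clique on $\{v_k,v_{2k},v_{3k},v_{4k},\dots,v_{4k+t}\}$; then Lemma \ref{rvdn} applies to both $G$ and $\overline{G}$. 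Your Part 1, your $rvd(G)\le 2$ cases, and your lower-bound sharpness via the extremal trees are all correct and coincide with the paper.
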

\begin{proof}
If $n=4$, then $G=\overline{G}=P_4$. By Lemma \ref{rvd1}, $rvd(G)\cdot rvd(\overline{G})=1$. The upper bounds are obvious. For the lower bound, by Lemmas \ref{rvdcomplemmn=57} and \ref{rvd2comple}, we only need to consider $n\geq 8$. we assume that $\delta(G)\leq \delta(\overline{G})$. Then $\delta(G)\leq \frac{n-1}{2}$. When $rvd(G)=1$, we have $rvd(G)\cdot rvd(\overline{G})\geq n-1$ by Lemma \ref{rvd1comple}. When $rvd(G)=2$, we have $rvd(G)\cdot rvd(\overline{G})\geq 2(n-3)=n-1+n-5\geq n-1$ by Lemma \ref{rvd2comple}. By symmetry, we consider $rvd(G)\geq 3$ and $rvd(\overline{G})\geq 3$. So, for $rvd(G)\cdot rvd(\overline{G})$ we only need to consider $n\geq 11$. Let
$$f=
\begin{cases}
\big\lceil{\frac{n-1}{3}}\big\rceil, & if\ \delta(G)\leq 3,\\
\big\lceil{\frac{n-1}{\delta(G)}}\big\rceil, & if\ \delta(G)\geq 4.
\end{cases}$$

If $rvd(\overline{G})\geq f$, then for $\delta(G)\leq 3$, $rvd(G)\cdot rvd(\overline{G})\geq 3f\geq n-1$; for $\delta(G)\geq 4$, $rvd(G)\cdot rvd(\overline{G})\geq \delta(G)\cdot f\geq n-1$ by Theorem \ref{rvdbound}.

Suppose $rvd(\overline{G})\leq f-1$. Let $u$ be the vertex with $d_G(u)=\delta(G)$. Let $H=G-N_{G}[u]$ and $v$ be any vertex of the graph $H$. Since $u$ is adjacent to $v$ in the graph $\overline{G}$, we have $m_{\overline{G}}(u,v)\leq f-2$. So, there are at most $f-2$ vertices which are not adjacent to $u$ and $v$ in $G$. Hence, we have
\begin{align*}
d_H(v)&\geq |H|-1-(f-2)\\
      &=n-\delta(G)-f.
\end{align*}

If $H$ is connected, then $rvd(G)\geq rvd(H)\geq \delta(H)\geq n-\delta(G)-f$ by Lemma \ref{rvdsubgraph} and Theorem \ref{rvdbound}.
If $H$ is not connected, then we denote a component with maximum rainbow vertex-disconnection number by $H_1$. Then $rvd(G)\geq rvd(H_1)\geq \delta(H_1)\geq n-\delta(G)-f$ by Lemma \ref{rvdsubgraph} and Theorem \ref{rvdbound}.

When $\delta(G)\leq 3$, $rvd(G)\cdot rvd(\overline{G})\geq (n-\delta(G)-f)\cdot 3\geq n-1+n-10\geq n-1$. When $\delta(G)\geq 4$, since $\delta(G)\leq \frac{n-1}{2}$, we have $rvd(G)\cdot rvd(\overline{G})\geq (n-\delta(G)-f)\cdot \delta(G)\geq n-1+\delta(G)\cdot (\delta(G)-4)\geq n-1$.

By Lemmas \ref{rvdcomplemmn=57} and \ref{rvd1comple}, the lower bound is sharp. For the upper bound, let $G$ be a graph with order $n=4k+t$ ($k\geq 3$ and $t=0,1,2,3$). The vertex set $V(G)$ can be partitioned into four cliques, $V_1=\{v_1,v_2,\cdots,v_{k}\}$, $V_2=\{v_{k+1},v_{k+2},\cdots,v_{2k}\}$, $V_3=\{v_{2k+1},v_{2k+2},\cdots,v_{3k}\}$ and $V_4=\{v_{3k+1},v_{3k+2},\cdots,v_{4k}, v_{4k+1},\cdots,v_{4k+t}\}$. Each vertex set $\{v_i,v_{k+i},v_{2k+i},v_{3k+i}\}$ forms a clique, where $i\in[k-1]$. The vertex set $\{v_k,v_{2k},v_{3k},v_{4k},v_{4k+1},\cdots,v_{4k+t}\}$ also forms a clique. Then we have $rvd(G)=rvd(\overline{G})=n$.
\end{proof}

\begin{thm}\label{rvdNGaddn-7}
Let $G$ and $\overline{G}$ be connected graphs of order $n$. Then $n-7 \leq rvd(G)+rvd(\overline{G})\leq 2n$.
\end{thm}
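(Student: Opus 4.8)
The upper bound is immediate: by Lemma \ref{rvdlocalconn} we have $rvd(G)\le n$ and $rvd(\overline G)\le n$, so $rvd(G)+rvd(\overline G)\le 2n$. The whole difficulty lies in the lower bound $rvd(G)+rvd(\overline G)\ge n-7$, and I would organize its proof in parallel with that of Theorem \ref{rvdNGmulti}. First I would dispose of the cases where one of the two numbers is small. If $rvd(G)=1$ then $G$ is a tree, and Lemmas \ref{rvdcomplemmn=57} and \ref{rvd1comple} give $rvd(\overline G)\ge n-2$, so the sum is at least $n-1$; the case $rvd(\overline G)=1$ is symmetric. If $rvd(G)=2$, then Lemma \ref{rvd2comple} gives $rvd(\overline G)\ge n-3$, so again the sum is at least $n-1$. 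The small orders $4\le n\le 7$ can be checked directly. Hence I may assume $rvd(G)\ge 3$ and $rvd(\overline G)\ge 3$ and, by symmetry, that $\delta(G)\le\delta(\overline G)$; writing $\delta=\delta(G)$ and $\Delta=\Delta(G)$, this forces $\delta\le\frac{n-1}{2}$ and $\delta+\Delta\le n-1$.

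The engine of the argument would be three lower bounds on $s:=rvd(G)+rvd(\overline G)$. From Theorem \ref{rvdbound}, $rvd(G)\ge\delta$ and $rvd(\overline G)\ge\delta(\overline G)=n-1-\Delta$, so $s\ge n-1-(\Delta-\delta)$. Next, taking a vertex $u$ with $d_G(u)=\delta$ and setting $H=G-N_G[u]$, one checks that for every $v\in V(H)$ the non-neighbours of $v$ inside $H$ are exactly the common neighbours of $u$ and $v$ in $\overline G$; since $uv\in E(\overline G)$, any $u$-$v$ rainbow vertex-cut contains this set and is rainbow together with $u$, so it has at most $rvd(\overline G)-1$ elements. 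Thus $\delta(H)\ge|V(H)|-rvd(\overline G)=n-1-\delta-rvd(\overline G)$, and passing to a component of largest $rvd$ and applying Lemma \ref{rvdsubgraph} with Theorem \ref{rvdbound} yields $rvd(G)\ge n-1-\delta-rvd(\overline G)$, i.e.\ $s\ge n-1-\delta$. The symmetric construction in $\overline G$, using a minimum-degree vertex of $\overline G$ whose non-neighbourhood has size $\Delta$, gives $s\ge\Delta$. When $\delta\le 6$ the bound $s\ge n-1-\delta$ already exceeds $n-7$; when $\Delta\ge n-7$ the bound $s\ge\Delta$ suffices; and when $\Delta-\delta\le 6$ the degree bound $s\ge n-1-(\Delta-\delta)$ finishes.

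This leaves the intermediate regime $\delta\ge 7$, $\Delta\le n-8$, $\Delta-\delta\ge 7$, which I expect to be the main obstacle: there all three bounds above can be as small as roughly $\frac{2(n-1)}{3}$ and so do not individually reach $n-7$. Here I would sharpen the subgraph step by invoking Theorem \ref{rvdmindegree}. If $rvd(\overline G)\le\tfrac12(|V(H)|-2)$, then $H$ has minimum degree at least $\tfrac12(|V(H)|+2)$, so $rvd(H)=|V(H)|=n-1-\delta$ and hence $rvd(G)\ge n-1-\delta$; combined with $rvd(\overline G)\ge n-1-\Delta$ and $\delta+\Delta\le n-1$ this already gives $s\ge n-1$. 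The genuinely delicate subcase is the complementary one, in which neither $H$ nor the analogous induced subgraph of $\overline G$ is dense enough to apply Theorem \ref{rvdmindegree}. In that situation I would exploit the finer structure of these near-half-dense induced subgraphs—applying Lemma \ref{rvddifcolor} to a carefully chosen rainbow-forced vertex set of pairwise doubly-connected vertices, together with the local-connectivity estimates underlying Lemma \ref{rvdlocalconn}—to recover the additive term that the elementary bounds lose and thereby bring the total up to $n-7$. Closing this balanced-degree subcase cleanly is the crux of the whole theorem.
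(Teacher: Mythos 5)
Your reduction to the case $rvd(G),rvd(\overline{G})\ge 3$ and your three degree-based bounds ($s\ge n-1-(\Delta-\delta)$, $s\ge n-1-\delta$, $s\ge\Delta$) are all sound, but, as you yourself concede, they do not close the balanced-degree regime $\delta\ge 7$, $\Delta\le n-8$, $\Delta-\delta\ge 7$ (e.g.\ $n=30$, $\delta=10$, $\Delta=20$ leaves all three bounds near $20$ while the target is $23$), and your final paragraph only gestures at ``exploiting the finer structure'' without an actual argument. That unresolved subcase is a genuine gap: the proof is not complete, and nothing in your outline indicates how the missing additive term would actually be recovered.

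The paper avoids degrees entirely at this point and instead runs a pigeonhole argument on the \emph{color classes of an rvd-coloring of $G$}, which is the idea your sketch is missing. Having reduced to $3\le rvd(G)\le rvd(\overline{G})\le\lceil\frac{n-1}{2}\rceil-1$, write $rvd(G)=k$ and let $V_1,\dots,V_k$ be the color classes; since $n/k>2$, either some class has at least $4$ vertices, or at least two classes have exactly $3$, or all but one class have size at most $2$ (which forces $k\ge\frac{n-1}{2}$ and we are done). In the first case, pick four vertices $D_i$ in one class: by Lemma \ref{rvddifcolor} any two of them have at most one common neighbor in $G$, i.e.\ at most one common non-neighbor in $\overline{G}$, so at most $\binom{4}{2}=6$ vertices miss two or more vertices of $D_i$ in $\overline{G}$; every pair of the remaining $n-10$ vertices then has two common neighbors inside $D_i$ in $\overline{G}$, so by Lemma \ref{rvddifcolor} that set is rainbow and $rvd(\overline{G})\ge n-10$, giving $s\ge 3+(n-10)=n-7$. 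The second case is analogous with two triples, yielding $rvd(\overline{G})\ge n-12$ and $rvd(G)\ge\lceil\frac{n}{3}\rceil\ge 5$. The key mechanism --- monochromatic vertices in $G$ force large pairwise doubly-connected, hence rainbow, sets in $\overline{G}$ --- is exactly what supplies the lower bound in the regime where your degree estimates stall; you would need to supply this (or an equivalent) argument to make your proof complete.
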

\begin{proof}
The upper bound is obvious. Now we consider the lower bound. For $n\geq 8$, when $rvd(G)=1$ or $2$, we have $rvd(G)+rvd(\overline{G})\geq n-1$ by Lemmas \ref{rvd1comple} and \ref{rvd2comple}. Assume that $rvd(\overline{G})\geq rvd(G)\geq 3$. If $rvd(G)\geq \big\lceil\frac{n-1}{2}\big\rceil$, then $rvd(G)+rvd(\overline{G})\geq n-1$. So, we consider $3\leq rvd(G)\leq rvd(\overline{G})\leq\big\lceil\frac{n-1}{2}\big\rceil-1$ and $n\geq 13$. Let $rvd(G)=k$ and $\{V_1,V_2,\cdots,V_k\}$ be the set of color classes of an rvd-coloring of $G$. Since $\frac{n}{k}>2$, there are three cases to consider.

\textbf{Case 1}. There exists a $V_i$ with $|V_i|\geq 4$.

Let $D_i$ be the subset of $V_i$ with four vertices. For any two vertices of $D_i$, they have at most one common nonadjacent vertex in $\overline{G}$ by Lemma \ref{rvddifcolor}.
Let $S=\{u|$ the vertex $u$ is not adjacent to at least two vertices of $D_i$ in $\overline{G}\}$. Let $T=V(\overline{G})\setminus(D_i\cup S)$. Then $|S|\leq {4\choose 2}=6$ and $|N_{\overline{G}}(v)\cap D_i|\geq 3$ for $v\in T$. For any two vertices $x,y$ of $T$, there are at least two common neighbors from $D_i$ in $\overline{G}$. By Lemma \ref{rvddifcolor}, the vertex set $T$ is rainbow in $\overline{G}$. Thus, $rvd(\overline{G})\geq n-10$ and $rvd(G)+rvd(\overline{G})\geq n-7$.

\textbf{Case 2}. There exist $V_i$, $V_j$ with $|V_i|=|V_j|=3$ and $|V_s|\leq 3$ for $s\in [k]$.

For any two vertices of $V_i$ or $V_j$, they have at most one common nonadjacent vertex in $\overline{G}$ by Lemma \ref{rvddifcolor}.
Let $S_1=\{u|$ the vertex $u$ is not adjacent to at least two vertices of $V_i$ in $\overline{G}\}$ and $S_2=\{u|$ the vertex $u$ is not adjacent to at least two vertices of $V_j$ in $\overline{G}\}$. Let $T=V(\overline{G})\setminus(V_i\cup V_j\cup S_1\cup S_2)$. Then $|S_1\cup S_2|\leq 6$. We have $|N_{\overline{G}}(v)\cap V_i|\geq 2$ and $|N_{\overline{G}}(v)\cap V_j|\geq 2$ for $v\in T$. For any two vertices $x,y$ of $T$, $x$ and $y$ have at least one common neighbor from $V_i$ and another from $V_j$ in $\overline{G}$. By Lemma \ref{rvddifcolor}, the vertex set $T$ is rainbow in $\overline{G}$. Thus, $rvd(\overline{G})\geq n-12$ and $rvd(G)+rvd(\overline{G})\geq \big\lceil\frac{n}{3}\big\rceil+n-12\geq n-7$.

\textbf{Case 3}. There is only one $V_i$ with $|V_i|=3$ and $|V_s|\leq 2$ for $s\in [k]\setminus \{i\}$.

We have $rvd(\overline{G})\geq rvd(G)\geq \frac{n-3}{2}+1=\frac{n-1}{2}$. So, $rvd(G)+rvd(\overline{G})\geq n-1$.
\end{proof}

\begin{thm}\label{rvdNGaddn-5}
Let $G$ and $\overline{G}$ be connected graphs of order $n\geq 24$. Then $n-5 \leq rvd(G)+rvd(\overline{G})\leq 2n$ and the upper bound is sharp.
\end{thm}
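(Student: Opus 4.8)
The upper bound is immediate from $rvd(G),rvd(\overline G)\le n$ (Lemma \ref{rvdlocalconn}), and its sharpness is already in hand: the four-clique-partition graph built in the proof of Theorem \ref{rvdNGmulti} satisfies $rvd(G)=rvd(\overline G)=n$ for every $n\ge 12$, hence for all $n\ge 24$, so $rvd(G)+rvd(\overline G)=2n$ there. All the work lies in the lower bound, and the plan is to reuse the framework of Theorem \ref{rvdNGaddn-7}, sharpening only the single case that produced the weaker estimate $n-7$.

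First I would reduce to one hard configuration. Assuming by symmetry that $rvd(G)\le rvd(\overline G)$, the cases $rvd(G)\in\{1,2\}$ are settled by Lemmas \ref{rvd1comple} and \ref{rvd2comple} (they give $rvd(\overline G)\ge n-3$, so the sum is at least $n-1$), and if $rvd(G)=k\ge n/2$ then the sum is at least $2k\ge n$. Hence I may assume $3\le k<n/2$ and run the color-class trichotomy of Theorem \ref{rvdNGaddn-7} on $G$. Its Case $3$ gives sum $\ge n-1$; its Case $2$ gives sum $\ge\lceil n/3\rceil+(n-12)\ge n-5$ since $n\ge 24$; and in Case $1$ (a color class of size $\ge 4$) one has $rvd(\overline G)\ge n-10$, so sum $\ge k+(n-10)\ge n-5$ as soon as $k\ge 5$. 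As Cases $2$ and $3$ both force $k\ge 5$, the only configuration left with $k\in\{3,4\}$ is Case $1$.

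For this last configuration I would drop the reservoir estimate and argue by degrees in $G$. Put $H=\{v:d_G(v)\ge (n+2)/2\}$ and $L=\{v:d_G(v)\le (n-4)/2\}$, and set $M=V(G)\setminus(H\cup L)$. Any two vertices of $H$ have at least $d_G(x)+d_G(y)-n\ge 2$ common neighbors, so by Lemma \ref{rvddifcolor} they get distinct colors in every $rvd$-coloring of $G$; thus $|H|\le rvd(G)=k$. Symmetrically every two vertices of $L$ have at least two common neighbors in $\overline G$, so $L$ is rainbow in $\overline G$ and $rvd(\overline G)\ge|L|=n-|H|-|M|$. Combining gives $rvd(G)+rvd(\overline G)\ge k+|L|\ge n-|M|$, and the whole theorem reduces to the claim $|M|\le 5$ when $k\le 4$.

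The bound on $|M|$ is the crux. Here I would use that $rvd(G)\le 4$ forces $\kappa^+(G)\le 4$ by Lemma \ref{rvdlocalconn}, so Menger's theorem shows that any two vertices of $G$ have at most $4$ common neighbors (at most $3$ when adjacent, since then $\kappa_{G-xy}(x,y)\le 3$). Each $v\in M$ has $d_G(v)\ge (n-2)/2$, so a double count of the incidences between common neighbors and pairs from $M$, namely $\sum_{\{x,y\}\subseteq M}|N_G(x)\cap N_G(y)|=\sum_{w}\binom{|N_G(w)\cap M|}{2}$, pits the upper estimate $4\binom{|M|}{2}$ on the left against a lower estimate on the right coming from $\sum_w|N_G(w)\cap M|=\sum_{v\in M}d_G(v)\ge |M|(n-2)/2$ and convexity of $\binom{\cdot}{2}$. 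For $n\ge 24$ this can hold only if $|M|\le 5$, giving $rvd(G)+rvd(\overline G)\ge n-5$. I expect the genuine difficulty to sit precisely in this count: the two estimates meet almost exactly at $n=24$ with $k=4$, so closing it will require the sharper value $3$ for adjacent pairs (together with the density of $G[M]$ that $\kappa^+(G)\le 4$ forces) and a parity check on $n$. This is exactly where the hypothesis $n\ge 24$ is consumed and where the argument is most delicate.
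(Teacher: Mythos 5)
Your reduction is sound up to the last step: the upper bound and its sharpness, the dispatch of $rvd(G)\in\{1,2\}$ and of $rvd(G)\ge n/2$, and the observation that Cases~2 and~3 of Theorem~\ref{rvdNGaddn-7} already force $k\ge 5$ (so that only Case~1 with $k\in\{3,4\}$ remains) all check out, as does the $H/M/L$ partition with $|H|\le k$ and $L$ rainbow in $\overline G$. The genuine gap is the claim $|M|\le 5$. Your double count $\sum_{\{x,y\}\subseteq M}|N_G(x)\cap N_G(y)|=\sum_{w}\binom{|N_G(w)\cap M|}{2}$ with the upper estimate $4\binom{|M|}{2}$ and the convexity lower estimate does \emph{not} close at the low end of the range: for $n=24$ it yields only $|M|\le 6$, and for odd $n$ the situation is worse because a vertex of $M$ need only have $d_G(v)\ge (n-3)/2$, so that at $n=25$ the same count permits $|M|=8$ (giving only $n-8$) and at $n=27$ it permits $|M|=6$. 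The refinements you gesture at (the value $3$ for adjacent pairs, the density of $G[M]$) buy nothing when $G[M]$ has few or no edges, and nothing in your setup forces $G[M]$ to be dense; so the argument as proposed does not establish $|M|\le 5$ and the theorem is not proved for $n\in\{24,25,27,\dots\}$.

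For comparison, the paper avoids this count entirely and handles all $k\le 5$ at once with a cleaner local argument: for any three vertices $v_1,v_2,v_3$ in one color class, the conditions $m_G(v_i,v_j)\le 1$ force $d_{\overline G}(v_3)\ge 2n-9-d_{\overline G}(v_1)-d_{\overline G}(v_2)$, so at most two vertices per color class can have $\overline G$-degree below $\frac{n+2}{2}$; the remaining $\ge n-2k$ vertices all have $\overline G$-degree above $\frac{n+2}{2}$, pairwise have two common neighbors in $\overline G$, and hence form a rainbow set, giving $rvd(\overline G)\ge n-2k$ and $rvd(G)+rvd(\overline G)\ge n-k\ge n-5$ for $k\le 5$. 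If you want to salvage your route, replacing the $H/M/L$ count by this per-color-class degree argument is the missing idea.
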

\begin{proof}
Let $rvd(G)=k$ and $\{V_1,V_2,\cdots,V_k\}$ be the set of color classes of an rvd-coloring of $G$. Then for any triple $\{v_1,v_2,v_3\}\subseteq V_i$, where $i\in [k]$, let $S=V(\overline{G})\setminus\{v_1,v_2,v_3\}$. Since $m_G(v_1,v_3)\leq 1$, we have $v_3$ is adjacent to at least $|S-N_{\overline{G}}(v_1)|-1$ vertices of the vertex set $S-N_{\overline{G}}(v_1)$ in $\overline{G}$. Since $m_G(v_2,v_3)\leq 1$, we have that $v_3$ is adjacent to at least $|S-N_{\overline{G}}(v_2)|-1$ vertices of the vertex set $S-N_{\overline{G}}(v_2)$ in $\overline{G}$. Since $m_G(v_1,v_2)\leq 1$, we obtain $|(S-N_{\overline{G}}(v_1))\cap (S-N_{\overline{G}}(v_2))|\leq 1$. If $d_{\overline{G}}(v_1)<\frac{n+2}{2}$ and $d_{\overline{G}}(v_2)< \frac{n+2}{2}$, then
\begin{align*}
d_{\overline{G}}(v_3)&\geq |S-N_{\overline{G}}(v_1)|+|S-N_{\overline{G}}(v_2)|-3\\
      &=2n-9-d_{\overline{G}}(v_1)-d_{\overline{G}}(v_2)\\
      &> n-11\\
      &\geq \frac{n+2}{2}.
\end{align*}
So, for $\overline{G}$ there is at least one vertex with degree more than $\frac{n+2}{2}$ in $\{v_1,v_2,v_3\}$. Let $T$ be the set of vertices with degrees larger than $\frac{n+2}{2}$ in $\overline{G}$. Then we have $|T|\geq \sum_{i\in [k]}(|V_i|-2)=n-2k$. For any two vertices $x$ and $y$ in $T$, $d_{\overline{G}}(x)+d_{\overline{G}}(y)\geq n+2$. So, we have that $T$ is rainbow by Lemma \ref{rvddifcolor}. Thus, $rvd(\overline{G})\geq n-2k$.
When $k\leq 5$, we have $rvd(G)+rvd(\overline{G})\geq n-k\geq n-5$. Now consider $k\geq 6$. Since $n\geq 24$, we have $rvd(G)+rvd(\overline{G})\geq n-4$ for the Case $2$ and Case $3$ of Theorem \ref{rvdNGaddn-7}. For the Case $1$ of Theorem \ref{rvdNGaddn-7}, we have $rvd(G)+rvd(\overline{G})\geq 6+n-10\geq n-4$.

The upper bound is sharp, which can be achieved by the graph $G$ with order $n=4k+t$ ($k\geq 6$ and $t=0,1,2,3$), described in Theorem \ref{rvdNGmulti}.
\end{proof}

In fact, we think that the lower bound of $rvd(G)+rvd(\overline{G})$ could be improved further. When $rvd(G)=1$, we have $rvd(G)+rvd(\overline{G})\geq n$ for $n\geq 8$ by Lemma \ref{rvd1comple}. So, we pose the following conjecture for further study.

\begin{conj}
Let $G$ and $\overline{G}$ be nontrivial connected graphs of order $n\geq 8$. Then $rvd(G)+rvd(\overline{G})\geq n$.
\end{conj}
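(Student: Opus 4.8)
The plan is to reduce the conjecture to a single inequality and then attack it according to the size of $rvd(G)$. Write $k=rvd(G)$ and fix an rvd-coloring of $G$ with color classes $V_1,\dots,V_k$; it then suffices to prove $rvd(\overline{G})\ge n-k$, since this is precisely $rvd(G)+rvd(\overline{G})\ge n$. To bound $rvd(\overline{G})$ from below I would reuse the mechanism behind Theorems \ref{rvdNGaddn-7} and \ref{rvdNGaddn-5}: by Lemma \ref{rvddifcolor}, any set $T\subseteq V(\overline{G})$ in which every two vertices have at least two common neighbors in $\overline{G}$ must be rainbow under every rvd-coloring of $\overline{G}$, so $rvd(\overline{G})\ge|T|$. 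Hence the whole conjecture becomes the combinatorial statement: \emph{exhibit such a set $T$ with $|T|\ge n-k$}. The arithmetic inputs I would lean on are that two same-colored vertices $v_1,v_2\in V_i$ satisfy $m_G(v_1,v_2)\le 1$, whence $d_G(v_1)+d_G(v_2)\le n+1$ and $d_{\overline{G}}(v_1)+d_{\overline{G}}(v_2)\ge n-3$, and that any pair with $\overline{G}$-degree-sum at least $n+2$ automatically has two common neighbors in $\overline{G}$.

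For small $k$ I would argue through the structure of graphs with small $rvd$. The case $k=1$ is exactly Lemma \ref{rvd1comple}, giving $rvd(\overline{G})\ge n-1$. For $k=2$, Lemma \ref{rvd2} forces $G$ to be a connected graph whose blocks are edges or cycles, with at least one cycle; such $G$ is sparse enough that $\overline{G}$ is dense, and I would try to sharpen Lemma \ref{rvd2comple} from $n-3$ to $n-2$ by producing $n-2$ vertices of $\overline{G}$ that pairwise share two common neighbors (discarding only a bounded number of exceptional vertices coming from the chosen end-blocks), which yields $2+(n-2)=n$. The genuinely missing ingredient is the analogous analysis for $k=3,4,5$: the paper only characterizes $rvd=1$ and $rvd=2$, so I would first need structural descriptions, or at least strong sparsity bounds, for graphs with $rvd(G)\in\{3,4,5\}$ and then convert sparsity of $G$ into a pairwise-two-common-neighbor set of size $n-k$ in $\overline{G}$.

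For large $k$ I would begin from the high-degree core $T_0=\{v:d_{\overline{G}}(v)\ge \tfrac{n+2}{2}\}$, which the argument of Theorem \ref{rvdNGaddn-5} shows has size at least $n-2k$, together with the symmetric bound $rvd(G)\ge n-2\,rvd(\overline{G})$. These two inequalities alone only yield $rvd(G)+rvd(\overline{G})\ge \tfrac{2n}{3}$, so the plan is to augment $T_0$ by one carefully chosen ``medium'' vertex from each color class $V_i$ contributing only $|V_i|-2$ vertices to $T_0$, pushing the count from $n-2k$ up to $n-k$. The within-class bound $d_{\overline{G}}(v_1)+d_{\overline{G}}(v_2)\ge n-3$ ensures these medium vertices are not too low in $\overline{G}$, which is the leverage I would use to keep every pairwise common-neighbor count at least two after augmentation.

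The hard part will be exactly this augmentation in the moderate regime where $rvd(G)$ and $rvd(\overline{G})$ are both comparable to $n/2$: there the high-degree core is too small to carry the bound by itself, yet no structure theorem pins down $G$, and controlling the cross-class pairs between the added medium vertices (where neither the degree-sum criterion nor Lemma \ref{rvddifcolor} applies automatically) is where a new idea is needed. I expect that settling the conjecture will require either a structural classification of graphs with constant $rvd$ to clear the small-$k$ cases cleanly, or a sharper packing argument that, unlike the high-degree core, loses only one vertex per color class rather than two.
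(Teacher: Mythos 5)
You should first be aware that this statement is not a theorem of the paper at all: it is posed there as an open conjecture, and the paper's own results stop at $rvd(G)+rvd(\overline{G})\geq n-7$ in general (Theorem \ref{rvdNGaddn-7}) and $\geq n-5$ for $n\geq 24$ (Theorem \ref{rvdNGaddn-5}). So there is no proof in the paper to compare against, and your proposal does not close the gap either: by your own account it is a strategy with explicitly missing steps, not a proof. The ingredients you state as facts are correct --- the reduction to showing $rvd(\overline{G})\geq n-k$ where $k=rvd(G)$, the rainbow-set mechanism via Lemma \ref{rvddifcolor}, the inequalities $d_G(v_1)+d_G(v_2)\leq n+1$ and $d_{\overline{G}}(v_1)+d_{\overline{G}}(v_2)\geq n-3$ for same-colored pairs, the criterion that an $\overline{G}$-degree sum of at least $n+2$ forces two common neighbors in $\overline{G}$, the case $k=1$ via Lemma \ref{rvd1comple}, and the observation that combining $rvd(\overline{G})\geq n-2\,rvd(G)$ with its symmetric counterpart yields only $rvd(G)+rvd(\overline{G})\geq \frac{2n}{3}$.

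The genuine gaps are the ones you yourself name, and they are substantial rather than routine. For $k=2$ you would need to strengthen Lemma \ref{rvd2comple} from $n-3$ to $n-2$; this is plausible (for $G=C_n$ it is immediate, since $\delta(\overline{C_n})=n-3$ makes every degree sum in $\overline{G}$ at least $n+2$ for $n\geq 8$), but for general block-trees of edges and cycles no argument is given. For $k\in\{3,4,5\}$ there is no structural characterization of graphs with those $rvd$ values, in the paper or elsewhere, and you supply none. In the moderate regime the augmentation of the high-degree core $T_0$ from $n-2k$ to $n-k$ vertices fails exactly where you say it does: two added ``medium'' vertices, whether from the same or different color classes, can have $\overline{G}$-degree sum as low as roughly $n-3$, so neither the degree-sum criterion nor Lemma \ref{rvddifcolor} guarantees they share two common neighbors in $\overline{G}$, and a rainbow-forcing set requires that pairwise condition for \emph{every} pair. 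Without a new idea at precisely this point, your plan recovers nothing beyond the paper's own bound of $n-5$ for $n\geq 24$, and the conjecture remains open.
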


\end{document}